\numberwithin{equation}{section}
\theoremstyle{plain}
\newtheorem{theorem}{Theorem}[section]
\newtheorem{corollary}[theorem]{Corollary}
\newtheorem{prop}[theorem]{Proposition}
\newtheorem{lemma}[theorem]{Lemma}
\theoremstyle{remark}
\newtheorem{remark}[theorem]{Remark}
\theoremstyle{definition}
\newtheorem{definition}[theorem]{Definition}
\newcommand{\supp}{\textrm{Supp}}
\newcommand{\la}{\langle}
\newcommand{\ra}{\rangle}
\newcommand{\e}{\varepsilon}
\newcommand{\N}{\mathbb{N}}
\newcommand{\R}{\mathbb{R}}
\newcommand{\Z}{\mathbb{Z}}
\newcommand{\dist}{\mathrm{dist}}
\newcommand{\cP}{\mathcal{P}}
\newcommand{\cM}{\mathcal{M}}
\newcommand{\cN}{\mathcal{N}}
\newcommand{\cD}{\mathcal{D}}
\newcommand{\cG}{\mathcal{G}}
\newcommand{\cE}{\mathcal{E}}
\newcommand{\cF}{\mathcal{F}}
\DeclareMathOperator{\hdim}{dim_H}
\DeclareMathOperator{\pdim}{dim_P}
\DeclareMathOperator{\ledim}{\underline{\dim}_e}
\DeclareMathOperator{\uedim}{\overline{\dim}_e}
\DeclareMathOperator{\lbdim}{\underline{\dim}_B}
\DeclareMathOperator{\ubdim}{\overline{\dim}_B}
\DeclareMathOperator{\mlbdim}{\underline{\dim}_{MB}}
\newcommand{\wh}{\widehat}
\newcommand{\wt}{\widetilde}
\begin{document}

\begin{frontmatter}[classification=text]

\author[ps]{Pablo Shmerkin \thanks{P.S. was partially supported by Projects PICT 2013-1393 and PICT 2014-1480 (ANPCyT)}}

\begin{abstract}
We obtain box-counting estimates for the pinned distance sets of (dense subsets of) planar discrete Ahlfors-regular sets of exponent $s>1$. As a corollary, we improve upon a recent result of Orponen, by showing that if $A$ is Ahlfors-regular of dimension $s>1$, then almost all pinned distance sets of $A$ have lower box-counting dimension $1$. We also show that if $A,B\subset\R^2$ have Hausdorff dimension greater than $1$ and $A$ is Ahlfors-regular, then the set of distances between $A$ and $B$ has modified lower box-counting dimension $1$, which taking $B=A$ improves Orponen's result in a different direction, by lowering packing dimension to modified lower box-counting dimension. The proofs involve ergodic-theoretic ideas, relying on the theory of CP-processes and projections.
\end{abstract}

\end{frontmatter}


\section{Introduction and main results}

In 1985, Falconer \cite{Falconer85} (implicitly) conjectured that if $A\subset\R^d$, with $d\ge 2$, is a Borel set of Hausdorff dimension at least $d/2$, then the set of distances
\[
\dist(A,A) = \{ |x-y|:x,y\in A\}
\]
has Hausdorff dimension $1$. He also showed that the value $d/2$ would be sharp. The conjecture remains wide open in every dimension, but several deep advances have been obtained; we discuss in some detail what is known in the plane, and refer to \cite{Erdogan05} for some of the known results in higher dimensions.  Throughout the paper, $\hdim$, $\pdim$, $\lbdim$, $\ubdim$, and $\mlbdim$ denote, respectively Hausdorff, packing, lower box-counting, upper box-counting, and modified lower box-counting dimensions. See \S\ref{subsec:dimension} for the definitions, and \cite{Falconer14} for further background on fractal dimensions.

Relying on earlier work of Mattila \cite{Mattila87}, and using deep harmonic-analytic techniques, Wolff \cite{Wolff99} showed that if $\hdim(A)>4/3$, then $\dist(A,A)$ has positive Lebesgue measure. As remarked in \cite{Wolff99}, $4/3$ appears to be the limit of these methods. Nevertheless, Iosevich and Liu \cite{IosevichLiu16} recently obtained an improvement for a large class of cartesian products.

Assuming only that $\hdim(A)\ge 1$, Bourgain \cite{Bourgain03} (relying on earlier work of Katz and Tao \cite{KatzTao01}) used sophisticated  additive-combinatorial arguments to prove that
\[
\hdim(\dist(A,A)) > 1/2+\varepsilon,
\]
for some small absolute constant $\varepsilon>0$.

To the best of our knowledge, the following stronger version of Falconer's conjecture might hold: if $\hdim(A)\ge 1$, then there exists $x\in A$ such that the \textbf{pinned distance set}
\[
\dist(x,A) = \{ |x-y|:y\in A\}
\]
has Hausdorff dimension $1$.  Although the method of Wolff does not appear to say anything about pinned distance sets, Peres and Schlag \cite{PeresSchlag00} employed the transversality method to prove that, under the stronger assumption $\hdim(A)>3/2$, for all $x$ outside of a set of dimension at most $3-\hdim(A)$, the pinned distance set $\dist(x,A)$ has positive Lebesgue measure.

Very recently, Orponen \cite{Orponen17} approached the problem from a different angle. Recall that a set $A\subset\R^d$ is called \textbf{$(s,C)$-Ahlfors regular}, or $s$-Ahlfors regular with constant $C$, if there exists a measure $\mu$ supported on $A$, such that $C^{-1} r^s \le \mu(B(x,r)) \le C r^s$ for all $x\in \supp(\mu)$ and all $r\in (0,1]$. Orponen showed that if $A\subset\R^2$ is $(s,C)$-Alhfors regular for some $s\ge 1$ and any $C>1$, then the \emph{packing} dimension of $\dist(A,A)$ is $1$. In fact, a small modification of his method shows that also the lower box-counting of $\dist(A,A)$ equals $1$.

In this article, we improve upon Orponen's result in several directions: we obtain results on the existence of many large \emph{pinned} distance sets, we weaken slightly the hypothesis of Ahlfors-regularity, we show that the \emph{modified} lower box-counting dimension of the distance set is 1, and we are able to consider the set of distances between two different sets.

Our first main result is a discretized version for large subsets of (discrete) Ahlfors-regular sets: we say that a set $A\subset \R^d$ is \textbf{discrete $(s,C)$-Ahlfors regular at scale $2^{-N}$} if
\[
C^{-1} 2^{(N-k)s} \le |B(x,2^{-k})\cap A| \le C 2^{(N-k)s}\quad\text{for all }x\in A, k\in [N],
\]
where $[N]=\{0,1,\ldots,N-1\}$. For a bounded set $F\subset \R^d$, we denote by $\cN(F,\e)$ the number of $\e$-grid cubes hit by $F$.

\begin{theorem} \label{thm:many-large-pinned-dist-sets}
Given $s>1, C>1, t\in (0,1)$, there exist $\e=\e(s,C,t)>0$ and $N_0=N_0(s,C,t)\in\N$ such that the following holds:

If $N\ge N_0$, and $A\subset [0,1]^2$ is a subset of a discrete $(s,C)$-Ahlfors regular set at scale $2^{-N}$, then
\[
| x\in A:\cN(\dist(x,A),2^{-N}) < 2^{tN}| \le 2^{(s-\e)N}.
\]
\end{theorem}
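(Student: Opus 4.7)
The plan is to argue by contradiction, reducing the pinned distance question to an orthogonal projection question via a standard ``local linearization'' trick, and then invoking a quantitative form of the Hochman-Shmerkin projection theorem for CP-distributions. Suppose the bad set
\[ B := \{x \in A : \cN(\dist(x,A),2^{-N}) < 2^{tN}\} \]
satisfies $|B| > 2^{(s-\e)N}$, for an $\e=\e(s,C,t)$ to be chosen small. Using the discrete Ahlfors regularity of the ambient set $A'\supseteq A$ together with pigeonholing, I would fix an intermediate scale $2^{-k}$ with $1\ll k \ll N$ and two dyadic squares $Q_1,Q_2$ of side $2^{-k}$ at distance $\asymp 1$ from each other, such that $|B\cap Q_1|\gtrsim 2^{(s-O(\e))N - O(k)}$ while $A'\cap Q_2$ retains its full Ahlfors-regular mass $\asymp 2^{s(N-k)}$.

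The key geometric reduction is that, because $|Q_1|,|Q_2|\ll \dist(Q_1,Q_2)$, the pinned distance map $y\mapsto |x-y|$ restricted to $Q_2$ is, for each $x\in Q_1$, a diffeomorphic perturbation of the orthogonal projection $\pi_{\theta_x}$ onto the direction $\theta_x := (x - c_{Q_2})/|x-c_{Q_2}|$. Consequently, for every $x\in B\cap Q_1$, the bound $\cN(\dist(x,A\cap Q_2),2^{-N})\leq 2^{tN}$ translates (with scale losses depending only on $k$) into $\cN(\pi_{\theta_x}(A\cap Q_2),2^{-N})\lesssim 2^{tN}$. Thus the orthogonal projection of $A\cap Q_2$ in each direction $\theta_x$ (as $x$ ranges over $B\cap Q_1$) has very few $2^{-N}$-cells.

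The next step is to apply the CP-process machinery. The discrete Ahlfors regularity of $A'$ on $Q_2$ allows us to extract a CP-distribution $P$ from the zoom-in dynamics on $A'\cap Q_2$, with $P$-typical measures having exact dimension $s>1$. The projection theorem of Hochman-Shmerkin for CP-distributions asserts that for $P$-a.e.\ measure $\nu$ and \emph{every} direction $\theta\in S^1$, $\pi_\theta\nu$ has dimension $\min(s,1)=1$. Since $|B\cap Q_1|\gtrsim 2^{(s-O(\e))N - O(k)}$ is much larger than the trivial projection onto $S^1$ could be if the directions $\theta_x$ were concentrated, the angular set $\{\theta_x:x\in B\cap Q_1\}$ must have positive one-dimensional measure. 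A quantitative version of the projection theorem then forces $\cN(\pi_{\theta_x}(A'\cap Q_2),2^{-N})\gtrsim 2^{(1-\delta)(N-k)}$ for a positive-measure set of such $\theta_x$, with $\delta=\delta(\e,k)\to 0$ as $\e\to 0$ and $k\to\infty$. Comparing this to the $2^{tN}$ upper bound of Step 2, and using that $A\cap Q_2$ is forced by the hypothesis $|B|\geq 2^{(s-\e)N}$ to be a dense subset of $A'\cap Q_2$ (so that projection lower bounds for $A'\cap Q_2$ pass to $A\cap Q_2$ up to controlled losses), one arrives at a contradiction with $t<1$ for small enough $\e$ and large enough $N$.

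The main obstacle is producing a quantitative form of the projection theorem with explicit exponents, since the classical Hochman-Shmerkin statement is only qualitative. I expect to obtain this through an entropy-continuity argument in the Hochman-Shmerkin framework combined with a compactness argument over the (tight) family of CP-distributions compatible with the Ahlfors regularity constant $C$; a further technicality is that the linearization of Step 2 is only accurate at scales $\gtrsim 2^{-k}$, so the CP-process computations must be carried out only at scales between $k$ and $N$ and combined with the inheritance of projection bounds from $A'$ to its dense subsets such as $A$.
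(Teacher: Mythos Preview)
Your high-level strategy --- contradiction, CP-process limit, Hochman--Shmerkin projection theorem, linearization of the distance map --- is exactly the paper's. The reduction you sketch, however, diverges from the paper at the crucial step of \emph{choosing the direction}, and this is where your proposal has a genuine gap.

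The paper first reduces Theorem~\ref{thm:many-large-pinned-dist-sets} to the statement ``if $|A|\ge 2^{(s-\e)N}$ then \emph{some} $x\in A$ has a large pinned distance set'' (Proposition~4.3), then applies that proposition to the bad set $B$ itself. In the proof of Proposition~4.3, the order of operations is: (i) pass to a limiting CPD $P$; (ii) fix a single direction $v$ with $\cE(v)=1$; (iii) use a discrete conical density lemma (Lemma~4.2) to show that for \emph{most} $x\in A_j$ there exists $y\in A_j$ with $\sigma(x,y)$ within $2^{-q}$ of $v$ and $|x-y|$ not too small; (iv) localize to a dyadic square $D\ni y$ and apply the entropy/projection estimate. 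The conical density lemma is the device that lets you \emph{prescribe} the direction first and then find points realizing it --- and it is precisely here that the hypothesis $s>1$ enters.

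Your proposal reverses this: you first pigeonhole to two squares $Q_1,Q_2$ at distance $\asymp 1$, which \emph{fixes} the direction up to an arc of length $O(2^{-k})$, and then you need that arc to contain a good direction. But the projection theorem only gives $\cE(v)=1$ for almost every $v$; there is no reason your pigeonholed arc avoids the null set of bad directions. Your attempted fix --- that the angular set $\{\theta_x:x\in B\cap Q_1\}$ has positive measure because $|B\cap Q_1|$ is large --- is not obviously correct: all those $\theta_x$ lie in an arc of length $O(2^{-k})$, and if $k\to\infty$ (as ``$1\ll k\ll N$'' suggests) the arc shrinks to a point. Even with $k$ bounded, showing the radial projection of $B\cap Q_1$ from $c_{Q_2}$ has positive length is itself a conical-density-type statement that needs proof.

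A second gap: you apply the CP-process to $A'\cap Q_2$ and then claim ``$A\cap Q_2$ is forced \ldots\ to be a dense subset of $A'\cap Q_2$''. This is false --- nothing prevents $A\cap Q_2$ from being empty. The paper avoids this entirely by working with the bad set $B$ throughout (so the relevant set in $Q_2$ is $B\cap Q_2$, automatically large by pigeonholing), and uses Lemma~3.4 to show that a measure supported on $B$ with near-maximal entropy is still $s$-rich, so the limiting CPD has $\dim_*P\ge s$ despite $B$ being only a subset of the Ahlfors-regular set.

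Finally, the ``main obstacle'' you identify (quantifying the projection theorem) is handled in the paper not by a uniform estimate over all CP-distributions compatible with $C$, but simply by the compactness inherent in the contradiction argument: one passes to the limit along a sequence of counterexamples, applies the qualitative theorem to the limit $P$, and pulls the bound back via weak convergence and continuity of $H_q$.
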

An inspection of the proof shows that we can take $\e=(1-t)/C'$ for some effective $C'=C'(s,C)>0$. The value of $N_0$ does not appear to be effective from the current proof.

Theorem \ref{thm:many-large-pinned-dist-sets} fails rather dramatically for $s=1$, as witnessed by the example described in \cite[Eq. (2) and Figure 1]{KatzTao01}. Namely, given $N\gg 1$, let
\[
A_N = \left\{ (x,y): x\in \{ i 2^{-N/2}:0\le i < 2^{N/2}-1 \}, y\in  \{ j 2^{-N}: 0\le j < 2^{N/2} \}\right\}.
\]
This is a discrete $1$-Ahlfors regular set at scale $2^{-N}$, yet one can check that $\cN(\dist(x,A_N),2^{-N}) = O(2^{N/2})$ for all $x\in A_N$. In the proof of Theorem \ref{thm:many-large-pinned-dist-sets}, the role of the assumption $s>1$ is to ensure that the set of directions determined by pairs of points in $A$ is dense ``with high multiplicity'', see \S\ref{subsec:conical-density} below. This obviously fails for each of the sets $A_N$ and, more generally, for many discrete $1$-Ahlfors regular sets. We thank an anonymous referee for pointing out this ``almost counter-example'' to Theorem \ref{thm:many-large-pinned-dist-sets}.

We obtain several corollaries from Theorem \ref{thm:many-large-pinned-dist-sets} . Firstly, for sets of full Hausdorff dimension inside an Ahlfors-regular set, nearly all pinned distance sets have full lower box-counting dimension:
\begin{corollary} \label{cor:pinned-dist-set-large-dev}
For every $t\in (0,1)$, $s>1$, $C>0$ there is $\e=\e(s,C,t)>0$ such that the following holds. Let $A$ be a bounded subset of a $(s,C)$-Ahlfors regular set in $\R^2$. Then
\[
\hdim\{x\in A: \lbdim(\dist(x,A))<t\} < s-\e.
\]
Moreover, if $\mathcal{H}^s(A)>0$, then
\[
\lbdim(\dist(x,A))=1 \quad\text{for $\mathcal{H}^s$-almost all $x\in A$}.
\]
In particular, this holds if $A$ is itself $(s,C)$-Ahlfors regular.
\end{corollary}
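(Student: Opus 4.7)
The plan is to reduce the first conclusion to Theorem~\ref{thm:many-large-pinned-dist-sets} via dyadic discretization followed by a Borel--Cantelli-type argument; the second conclusion will follow formally from the first. Fix $t' \in (t,1)$, and let $\tilde{A}$ be an $(s,C)$-Ahlfors regular set containing $A$. For each $N \in \N$, take a maximal $2^{-N}$-separated subset $\tilde{A}_N \subseteq \tilde{A}$; a standard volume computation shows that $\tilde{A}_N$ is discrete $(s, C')$-Ahlfors regular at scale $2^{-N}$ for some $C' = C'(C)$. Let $A_N \subseteq \tilde{A}_N$ consist of those points within distance $2^{-N}$ of $A$. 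For $x \in A$ and any nearest $x' \in A_N$, the sets $\dist(x,A)$ and $\dist(x', A_N)$ lie within Hausdorff distance $2 \cdot 2^{-N}$, which gives
\[
\cN(\dist(x,A), 2^{-N}) \le K \, \cN(\dist(x', A_N), 2^{-N})
\]
for some constant $K = K(C)$.

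Applying Theorem~\ref{thm:many-large-pinned-dist-sets} to $A_N \subseteq \tilde{A}_N$ with parameter $t'$, I obtain $\varepsilon = \varepsilon(s, C', t') > 0$ such that, for all sufficiently large $N$,
\[
B_N := \{ y \in A_N : \cN(\dist(y,A_N), 2^{-N}) < 2^{t'N}\} \quad \text{satisfies} \quad |B_N| \le 2^{(s-\varepsilon)N}.
\]
Setting $\hat{B}_N := \{x \in A : \cN(\dist(x,A), 2^{-N}) < 2^{t'N}/K\}$, the above comparability forces $\hat{B}_N$ into the $2^{-N}$-neighbourhood of $B_N$, so $\hat{B}_N$ is covered by $2^{(s-\varepsilon)N}$ balls of radius $O(2^{-N})$. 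If $\lbdim(\dist(x,A)) < t$ then $\cN(\dist(x,A), 2^{-N}) < 2^{t'N}/K$ for infinitely many $N$ (since $t < t'$), so the exceptional set in the first conclusion lies inside $\limsup_N \hat{B}_N$. The $(s - \varepsilon/2)$-Hausdorff content at scale $2^{-N}$ of $\hat{B}_N$ is $\lesssim 2^{(s-\varepsilon)N} \cdot 2^{-(s - \varepsilon/2)N} = 2^{-\varepsilon N/2}$; summing over $N \ge N_0$ and letting $N_0 \to \infty$ yields $\mathcal{H}^{s-\varepsilon/2}(\limsup_N \hat{B}_N) = 0$, proving the first claim with $\varepsilon/2$ in place of $\varepsilon$.

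For the almost-everywhere statement I would decompose
\[
\{x \in A : \lbdim(\dist(x,A)) < 1\} = \bigcup_{n \ge 2} \{x \in A : \lbdim(\dist(x,A)) < 1 - 1/n\},
\]
a countable union of sets with Hausdorff dimension strictly less than $s$ by the first part. Each summand therefore has vanishing $\mathcal{H}^s$-measure, and hence so does their union; the final assertion of the corollary is immediate since an $(s,C)$-Ahlfors regular set automatically satisfies $\mathcal{H}^s(A) > 0$. The principal obstacle here is purely technical: one must carefully match the continuous set $A$ and its distance sets to the discrete framework of Theorem~\ref{thm:many-large-pinned-dist-sets} while preserving Ahlfors-regularity of the ambient discretization. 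Once this is in place the rest is a routine Borel--Cantelli computation.
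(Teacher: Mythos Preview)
Your proof is correct and follows essentially the same approach as the paper: discretize the ambient Ahlfors-regular set, apply Theorem~\ref{thm:many-large-pinned-dist-sets} at each scale, and run a Borel--Cantelli argument on the resulting exceptional sets. The only cosmetic differences are that the paper discretizes via centers of dyadic squares rather than maximal separated subsets, applies the theorem directly at level $t$ (absorbing the constant $K$ into the $\liminf$), and derives the $\mathcal{H}^s$-a.e.\ statement by contradiction via a subset $B$ rather than by your countable-union decomposition---both routes are equally valid.
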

In the above corollary, $\mathcal{H}^s$ denotes $s$-dimensional Hausdorff measure. It is also easy to deduce a statement purely about box-counting dimensions:
\begin{corollary} \label{cor:dist-set-box-dim}
Let $A$ be a bounded subset of a $(s,C)$-Ahlfors regular set in $\R^2$, with $\lbdim(A)=s>1$ (resp. $\ubdim(A)=s>1$). Then
\[
\lbdim(\dist(A,A)) = 1 \quad(\text{resp. } \ubdim(\dist(A,A)=1)).
\]
\end{corollary}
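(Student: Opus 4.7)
The plan is to reduce Corollary~\ref{cor:dist-set-box-dim} directly to Theorem~\ref{thm:many-large-pinned-dist-sets} by discretising at each scale $2^{-N}$. After rescaling, I assume $A \subset B \subset [0,1]^2$ where $B$ is $(s,C)$-Ahlfors regular with witness measure $\mu$. I would take $B_N$ to be a maximal $2^{-N}$-separated subset of $B$; a standard volume argument using $\mu(B(x,r)) \asymp r^s$ shows that $B_N$ is discrete $(s,C')$-Ahlfors regular at scale $2^{-N}$, for some $C'$ depending only on $C$ and $s$. I then define $A_N = \{x \in B_N : \dist(x, A) \leq 2^{-N}\}$, so that $A_N \subset B_N$ and $|A_N| \asymp \cN(A, 2^{-N})$, each $2^{-N}$-grid cube meeting $A$ contributing $\Theta(1)$ points.

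Next, I fix a target exponent $t \in (0,1)$, apply Theorem~\ref{thm:many-large-pinned-dist-sets} with constants $(s, C', t)$ to obtain $\varepsilon = \varepsilon(s,C',t) > 0$, and pick any $\delta \in (0, \varepsilon)$. In the lower box-counting case, the hypothesis $\lbdim(A) = s$ gives $\cN(A, 2^{-N}) \geq 2^{(s-\delta)N}$ for all sufficiently large $N$, whence $|A_N| > 2^{(s-\varepsilon)N}$ for $N$ large. Theorem~\ref{thm:many-large-pinned-dist-sets} then supplies some $x \in A_N$ with $\cN(\dist(x, A_N), 2^{-N}) \geq 2^{tN}$. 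Because every element of $\dist(x, A_N)$ lies within $2 \cdot 2^{-N}$ of an element of $\dist(A, A)$ (by the definition of $A_N$), this forces $\cN(\dist(A, A), 2^{-N}) \gtrsim 2^{tN}$, and hence $\lbdim(\dist(A, A)) \geq t$. Letting $t \uparrow 1$ yields $\lbdim(\dist(A, A)) \geq 1$; the reverse bound is trivial since $\dist(A, A) \subset \R$.

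The upper box-counting statement follows by running the same argument along any subsequence of scales $N_k \to \infty$ for which $\cN(A, 2^{-N_k}) \geq 2^{(s-\delta)N_k}$; such a subsequence exists because $\ubdim(A) = s$. This yields $\cN(\dist(A, A), 2^{-N_k}) \gtrsim 2^{tN_k}$, hence $\ubdim(\dist(A, A)) \geq t$, and sending $t \uparrow 1$ completes the proof. I expect no significant obstacle here: the only technical step is the construction of the discrete Ahlfors-regular host $B_N \supset A_N$, which is a routine net argument, and all the genuine content is already encoded in Theorem~\ref{thm:many-large-pinned-dist-sets}.
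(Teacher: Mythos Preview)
Your proposal is correct and follows essentially the same route as the paper: discretise $A$ at scale $2^{-N}$ inside a discrete $(s,C')$-Ahlfors regular host, invoke Theorem~\ref{thm:many-large-pinned-dist-sets} to find a single good pinned point once $|A_N|>2^{(s-\e)N}$, and transfer the resulting box-count back to $\dist(A,A)$ by the triangle inequality. The only cosmetic difference is that the paper uses the centres of dyadic squares hitting $A$ (together with Lemma~\ref{lem:Ahlfors-set-to-measure}) to produce the discrete regular host, whereas you use a maximal $2^{-N}$-separated net of $B$; both are standard and equivalent for this purpose.
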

We underline that the Hausdorff dimension of sets satisfying the hypothesis of the above corollary may be arbitrarily small, or even zero.

Our second main result concerns the set of distances between two, possibly disjoint, sets $A, B\subset\R^2$. Although here we do not get a discretized result, we do get large \emph{modified} lower box-counting dimension of the distance set (which we recall is smaller than both lower box dimension and packing dimension, and unlike the former is countably stable).   Moreover, while for one of the sets we still need to assume Ahlfors-regularity, for the other we only require that the Hausdorff dimension strictly exceeds $1$.
\begin{theorem} \label{thm:mlbdim-distance-sets-AR}
Let $A,B\subset \R^2$ be two Borel sets such that $\hdim(A)>1$ and $B$ is $(s,C)$-Ahlfors regular for some $s>1$. Then
\[
\mlbdim(\dist(A,B)) = 1.
\]
In particular, if $A$ is $s$-Ahlfors regular with $s>1$, then its distance set has full modified lower box-counting dimension.
\end{theorem}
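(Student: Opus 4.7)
The plan is to reduce Theorem \ref{thm:mlbdim-distance-sets-AR} to a two-set analogue of Theorem \ref{thm:many-large-pinned-dist-sets}, and then use a measure-theoretic argument to pass from discrete pinned-distance data to a $\mlbdim$ bound. For setup, note that $\mlbdim\ge\hdim$ (by countable stability of Hausdorff dimension together with $\lbdim\ge\hdim$), so it suffices to produce a measure supported on $\dist(A,B)$ of full Hausdorff dimension $1$. I would pass to a compact $A'\subset A$ with $\hdim A'>1$ and equip it via Frostman's lemma with a probability measure $\mu$ satisfying $\mu(B(x,r))\le c\,r^{s_1}$ for some $s_1>1$; let $\nu$ denote the $(s,C)$-Ahlfors regular measure on a bounded piece of $B$.

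The core step is the following two-set analogue of Theorem \ref{thm:many-large-pinned-dist-sets}: for every $t\in(0,1)$ there exist $\e>0$ and $N_0$ such that, for all $N\ge N_0$ and $B_N$ the natural $2^{-N}$-discretization of $\supp\nu$,
\[
\mu\bigl(\{x\in A':\cN(\dist(x,B_N),2^{-N})<2^{tN}\}\bigr)\le 2^{-\e N}.
\]
The proof should parallel that of Theorem \ref{thm:many-large-pinned-dist-sets}: the pinning points are now drawn from the Frostman measure $\mu$ rather than from an Ahlfors-regular measure, which is enough because only a dimension lower bound on the pinning measure is genuinely used in the CP-process/projection inputs, while the pinned set $B_N$ remains Ahlfors regular so the conical-density estimates of \S\ref{subsec:conical-density} apply to it unchanged.

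From this discretized estimate, Borel--Cantelli combined with a diagonal argument as $t\uparrow 1$ yields $\lbdim(\dist(x,B))=1$ for $\mu$-a.e.\ $x\in A'$. To promote this from lower box-counting (which is not countably stable) to $\mlbdim$, I would rerun the CP-process machinery to obtain the finer fact that, for $\mu$-a.e.\ $x$, the pushforward $(\pi_x)_\ast\nu$ with $\pi_x(y)=|x-y|$ is exact-dimensional of dimension $1$. Integrating over $x$ then gives that the global distance measure $\eta=\Phi_\ast(\mu\otimes\nu)$, with $\Phi(x,y)=|x-y|$, has Hausdorff dimension $1$, so $\hdim(\dist(A,B))\ge\hdim(\supp\eta)=1$, and $\mlbdim\ge\hdim$ closes the argument.

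The main obstacle is the two-set discretized estimate itself. In the proof of Theorem \ref{thm:many-large-pinned-dist-sets}, Ahlfors regularity of the ambient set was used uniformly: every point carries regular mass at every scale, which makes CP-process zoom-ins and conical-density estimates available pointwise. With only a Frostman measure on $A$, points can have very irregular local behaviour, so one must show that the set of ``bad'' points, where the pointwise estimates fail, has exponentially small $\mu$-measure---converting what was a pointwise argument into an averaged one.
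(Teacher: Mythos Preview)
There are two genuine gaps.

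First, the step ``rerun the CP-process machinery to obtain the finer fact that $(\pi_x)_\ast\nu$ is exact-dimensional of dimension $1$'' is not something the machinery of this paper delivers, and if it did you would be proving $\hdim(\dist(A,B))=1$, which is strictly stronger than the theorem and is essentially the Falconer conjecture in the Ahlfors regular setting. The CP-process/projection input (Theorem \ref{thm:projections-CPD} together with Corollary \ref{cor:lower-bound-entropy-pinned-dist}) controls \emph{normalized entropy at a scale}, i.e.\ quantities like $H_N(\phi_x\nu)$, and this only yields box-counting information about $\dist(x,B)$. It does not give pointwise local dimension bounds for $(\pi_x)_\ast\nu$. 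The paper gets from $\lbdim$ to $\mlbdim$ by a completely different and much softer device: a Baire category argument (Lemma \ref{lem:simplif-box-dim}). If $\mlbdim(\dist(A,B))<1$, one partitions $\dist(A,B)$ into countably many pieces of small $\lbdim$, applies Baire in $A\times B$ to find a piece whose preimage under $(x,y)\mapsto|x-y|$ contains a product $A_0\times B_0$ of relatively open sets, and then only needs $\lbdim(\dist(A_0,B_0))=1$ for a contradiction. This is the reduction you are missing.

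Second, the claim that ``the conical-density estimates of \S\ref{subsec:conical-density} apply to $B_N$ unchanged'' does not give what you need in the two-set situation. Lemmas \ref{lem:discrete-dense-radial-projection} and \ref{lem:dense-radial-proj-discr-AR} assert that most points of a single discrete Ahlfors regular set are well surrounded \emph{by the same set}; they say nothing about directions from points of $A$ to points of $B$. What the argument actually requires is that for the chosen good direction $v$ (for the CP-limit of $\nu$) and for $\nu$-many $y\in B$ there exists $x\in A$ with $\sigma(x,y)\approx v$. In the one-set proof this is exactly what conical density provides, because the pinning point and the target cube both live in the same regular set. For two different sets the paper supplies this via the Mattila--Orponen radial projection theorem (Lemma \ref{lem:simplif-uniform-angle}): since $\hdim(A)>1$, for $\nu$-a.e.\ $y$ the set of directions $\{\sigma(x,y):x\in A\}$ has positive length, and a Fubini/density-point argument then produces a positive-measure set $\Theta\subset S^1$ of directions that are realized from $A$ to $\nu$-most $y\in B$. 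One then picks $v\in\Theta$ with $\mathcal{E}(v)=1$. Without this ingredient you cannot match the good projection direction $v$ with an actual pair $(x,y)\in A\times B$, and the entropy lower bound from Corollary \ref{cor:lower-bound-entropy-pinned-dist} never gets off the ground.

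In short, the paper's proof runs the CP-process on the (weakly regular) target measure on $B$, replaces the discrete conical density by Mattila--Orponen to supply directions between $A$ and $B$, proves $\lbdim(\dist(A,B))=1$, and upgrades to $\mlbdim$ via Baire category rather than via Hausdorff dimension. Your two-set discretized estimate with exponentially small exceptional $\mu$-measure and the exact-dimensionality of $(\pi_x)_\ast\nu$ are both stronger statements than what is needed, and neither follows from the tools at hand.
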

In fact, we are able to somewhat weaken the assumptions on $A$ and $B$, see Theorem \ref{thm:mlbdim-distance-sets} below and the remark after the proof.

The proof of Theorem \ref{thm:mlbdim-distance-sets-AR} also yields the following:
\begin{corollary} \label{cor:pinned-dist-set-upper-box-dim}
Let $A,B\subset \R^2$ be two Borel sets such that $\hdim(A)>1$ and $B$ is $(s,C)$-Ahlfors regular for some $s>1$.  Then
\[
\hdim(\{ x\in A: \ubdim(\dist(x,B))<1 \})\le 1.
\]
In particular, this applies to $A=B$.
\end{corollary}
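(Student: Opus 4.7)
The plan is to argue by contradiction using Frostman's lemma together with a pointwise strengthening of the proof of Theorem \ref{thm:mlbdim-distance-sets-AR}. Suppose, for contradiction, that the exceptional set
\[
E := \{x \in A : \ubdim(\dist(x, B)) < 1\}
\]
has $\hdim(E) > 1$. Write $E = \bigcup_{k \in \N} E_k$ with $E_k := \{x \in A : \ubdim(\dist(x, B)) \le 1 - 1/k\}$, and use countable stability of Hausdorff dimension to choose $k$ with $\hdim(E_k) > 1$. A further decomposition of $E_k$ according to the smallest scale from which the bound $\cN(\dist(x, B), 2^{-n}) \le 2^{(1-1/k)n}$ first takes effect isolates a subset $A' \subseteq E_k$ of Hausdorff dimension greater than $1$ on which this estimate holds uniformly in $x$.

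The central step is to extract from the proof of Theorem \ref{thm:mlbdim-distance-sets-AR} the following pointwise statement: whenever $\mu$ is a compactly supported Borel probability measure on $\R^2$ satisfying $\mu(B(x,r)) \le C' r^{s'}$ for some $s' > 1$ and $C' > 0$, and $B$ is $(s,C)$-Ahlfors regular for some $s > 1$, then $\ubdim(\dist(x, B)) = 1$ for $\mu$-almost every $x$. Once this is granted, an application of Frostman's lemma to $A'$ furnishes such a measure $\mu$ supported on $A'$, and the $\mu$-a.e. lower bound on $\ubdim(\dist(x, B))$ immediately contradicts the uniform upper bound in force on $A'$, completing the argument.

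The main obstacle is the pointwise extraction in the previous paragraph. Theorem \ref{thm:mlbdim-distance-sets-AR} is phrased as a global bound on $\mlbdim(\dist(A, B))$, but the CP-process and tangent-measure machinery driving its proof is intrinsically pointwise: at $\mu$-typical $x$ one obtains a micromeasure distribution whose projections onto lines through $x$ have dimension close to $1$, and Ahlfors regularity of $B$ should let one transfer this to a lower bound on $\cN(\dist(x,B), 2^{-n_j})$ along an infinite sequence of scales $n_j$ — precisely the content of $\ubdim(\dist(x,B)) \ge 1$. The real work is to verify that the sequence of scales along which the lower bound is produced can be arranged to depend only on the $\mu$-typical point $x$, with no global averaging that would be destroyed upon restricting attention to a single $x$.
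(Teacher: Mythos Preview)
Your strategy reaches the right conclusion, but it is considerably more elaborate than necessary, and the ``central step'' you isolate is not quite what the proof of Theorem~\ref{thm:mlbdim-distance-sets} actually delivers.

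The paper's argument is a two-line bootstrap. First, one observes that the proof of Theorem~\ref{thm:mlbdim-distance-sets} already furnishes a \emph{single} point $x\in A$ with $\ubdim(\dist(x,B))=1$: one simply starts from an arbitrary sequence $N_j\to\infty$ along which $\la \mu_B\ra_{[0,N_j]}$ converges (rather than one realising $\lbdim(\dist(A,B))$), and the remainder of the argument is unchanged, producing $x_i\in A$ and infinitely many scales $M_j$ with $H_{M_j}(\phi_{x_i}\mu_{D_i})\ge 1-O(\delta)$. Second, if the exceptional set $E=\{x\in A:\ubdim(\dist(x,B))<1\}$ had $\hdim(E)>1$, rerun the first step with $E$ in place of $A$ to obtain a point of $E$ with full upper box dimension for its pinned distance set---a contradiction. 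No decomposition of $E$ into the levels $E_k$, no uniformisation of scales, and no Frostman measure on $E$ are needed.

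Two further comments. Your decompositions of $E$ are harmless but redundant: existence of a single good point in any set of Hausdorff dimension greater than $1$ already forces the contradiction. More importantly, the $\mu$-a.e.\ statement you hope to extract (for a Frostman measure $\mu$ on $A'\subset A$) is not what the machinery yields directly. In the proof of Theorem~\ref{thm:mlbdim-distance-sets} the CP-process and tangent analysis are carried out on the measure $\mu_B$ living on $B$, not on any measure on $A$; the point $x_i\in A$ is selected afterwards via the Mattila--Orponen direction result and a density-point/pigeonhole step, and nothing in that selection is quantified over typical points of a measure on $A$. So the sentence ``at $\mu$-typical $x$ one obtains a micromeasure distribution'' mislocates where the tangent structure lives. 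One can of course recover your $\mu$-a.e.\ claim \emph{a posteriori} from the single-point existence by exactly the contradiction trick above, but then the Frostman measure on $A'$ is doing no real work.
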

Compared with Corollary \ref{cor:pinned-dist-set-large-dev}, we lower the size of the exceptional set (from zero measure to Hausdorff dimension $1$), at the price of dealing with upper box-counting dimension instead of lower box-counting dimension.

For the proofs, we follow some of the ideas of Orponen \cite{Orponen17}, but there are substantial differences. A key step in his approach is a projection theorem for entropy in the Ahlfors regular case, see \cite[Proposition 3.8]{Orponen17}, which is applied at all scales. It is unclear whether such a result continues to hold after removing even very small pieces of the initial regular set. Hence, in order to make the method robust under passing to large subsets (which is essential to the proof of Theorem \ref{thm:many-large-pinned-dist-sets}), we needed a different device to handle the entropy of projections. This more flexible device is the theory of CP-processes and projections developed in \cite{HochmanShmerkin12}, which we review in Section \ref{sec:preliminaries}. Very roughly speaking, a CP-process is a measure-valued dynamical system which consists in zooming in dyadically towards a typical point of the measure. Thus, this paper is another example of an application of ergodic-theoretic ideas to problems that, a priori, have nothing to do with dynamics or ergodic theory.

As noted by Orponen already in \cite{Orponen12}, in the study of distance sets the spherical projections $\sigma_x(y)=(x-y)/|x-y|$ play a key role (the reason is that they arise when linearizing the distance function). An important fact in Orponen's approach is that spherical projections of sets of dimension at least $1$ are dense. For the proof of Theorem \ref{thm:many-large-pinned-dist-sets} we require a discrete quantitative version of this (established in \S\ref{subsec:conical-density}), while for Theorem \ref{thm:mlbdim-distance-sets-AR} we rely instead on a recent result of Mattila and Orponen \cite{MattilaOrponen15}, see also \cite{Orponen16}.

The paper is organized as follows. In Section \ref{sec:preliminaries} we set up notation, recall different notions of dimensions, and review the parts of the theory of CP-processes that we will require. In Section \ref{sec:Ahlfors-regularity} we discuss a notion of regularity weaker than Ahlfors-regularity. Theorem \ref{thm:many-large-pinned-dist-sets} and its corollaries are proved in Section \ref{sec:pinned-dist-sets}, while Theorem \ref{thm:mlbdim-distance-sets-AR} is proved in Section \ref{sec:distances-between-sets}.

\section{Preliminaries}
\label{sec:preliminaries}

\subsection{Notation}

We use $O(\cdot)$ notation: $A=O(B)$ means $0\le A\le C B$ for some constant $B$; if $C$ is allowed to depend on any parameters, this are denoted as subscripts; e.g. $A = O_d(B)$ means $0 \le A \le C(d) B$. Finally, $A=\Omega(B)$ means $B=O(A)$, and likewise with subscripts.

Given a metric space $X$, we denote the family of all Borel probability measures on $X$ by $\cP(X)$, and the family of all Radon measures on $X$ by $\cM(X)$. When $X$ is compact, $\cP(X)$ is endowed with the weak topology, which is metrizable. If $f:X\to Y$ and $\mu\in\cM(X)$, the \emph{push-down measure} $f\mu$ is defined as $f\mu(A)=\mu(f^{-1}A)$. We note this is sometimes denoted $f_\#\mu$.

If $\mu\in\cM(X)$ and $\mu(A)>0$, then $\mu|_A$ is the restriction of $\mu$ to $A$ and, provided also $\mu(A)<\infty$, we denote by $\mu_A$ the restriction normalized to be a probability measure, that is
\[
\mu_A(B) =\frac{1}{\mu(A)}\mu(A\cap B).
\]

We work in an ambient dimension $d$; this will always be $1$ or $2$ in this paper.  We denote by $\cD_k^{(d)}$ the partition of $\R^d$ into half-open dyadic cubes
\[
\left\{  [j_1 2^{-k}, (j_1+1)2^{-k})\times \cdots\times [j_d 2^{-k}, (j_d+1)2^{-k}): j_1,\ldots,j_d\in\Z \right\}.
\]
When $d$ is clear from context, we simply write $\cD_k$. If $x\in \R^d$, we denote the unique element of $\cD_k^{(d)}$ containing $x$ by $D_k(x)$. In addition to the Euclidean metric, on $\R^d$ we consider the dyadic metric $\rho$ defined as follows: $\rho(x,y)=2^{-\ell}$, where $\ell=\max\{ k: D_k(x)=D_k(y)\}$.

Logarithms are always to base $2$. We denote Shannon entropy of the probability measure $\mu$ with respect to the finite measurable partition $\cF$ by $H(\mu,\cF)$, and the conditional entropy with respect to the finite measurable partition $\mathcal{G}$ by $H(\mu,\cF|\mathcal{G})$. That is,
\begin{align*}
H(\mu,\cF) &= \sum_{F\in\cF} -\mu(F)\log\mu(F),\\
H(\mu,\cF|\mathcal{G}) &= \sum_{G\in\mathcal{G}:\mu(G)>0} \mu(G) H(\mu_G,\cF).
\end{align*}
Here and below we follow the usual convention $0\cdot \log(0)=0$. We denote by $H_k(\mu)$ the normalized entropy $H(\mu,\cD_k)/k$, and note that if $\mu\in\mathcal{P}([0,1)^d)$, then $0\le H_k(\mu)\le 1$. The following are some standard properties of entropy that will get used in the sequel:
\begin{enumerate}
 \item If $|\cF|\le N$, then $H(\mu,\cF)\le \log N$.
 \item If $\cF,\cG$ are finite partitions such that each element of $\cF$ intersects at most $N$ elements of $\cG$ and vice versa, then
\[
|H(\mu,\cF)-H(\mu,\cG)| \le\log N.
\]
  \item (Concavity of entropy). If $\mu,\nu$ are probability measures, $t\in [0,1]$ and $\cF$ is a finite measurable partition, then
  \[
   H(t\mu+(1-t)\nu,\cF) \ge t H(\mu,\cF)+(1-t) H(\nu,\cF).
  \]

\end{enumerate}

Given two integers $A< B$ we denote $[A,B]=\{A,A+1,\ldots, B-1\}$. When $A=0$, we simply write  $[B]=\{ 0,1,\ldots, B-1\}$.

\subsection{Notions of dimension} \label{subsec:dimension}
In this section we quickly review the notions of dimension of sets and measures we will require. For further background on dimensions of sets, see e.g. Falconer's textbook \cite{Falconer14}, while for dimensions of measures and their relationships, we refer to \cite{FLR02}.

Recall that $\cN(F,\e)$ is the number of $\e$-grid cubes that intersect a bounded set $F\subset\mathbb{R}^d$. The \textbf{upper and lower box-counting dimensions} of $F$ are defined as
\begin{align*}
\lbdim(F) &= \liminf_{\e\downarrow 0} \frac{\log \cN(F,\e)}{-\log\e},\\
\ubdim(F) &= \limsup_{\e\downarrow 0} \frac{\log \cN(F,\e)}{-\log\e}.
\end{align*}
These dimensions are not countably stable. After making them countably stable in the natural way, one gets \textbf{modified lower box-counting dimension} $\mlbdim$ and \textbf{packing dimension}:
\begin{align*}
\mlbdim(F) &= \inf\{ \sup_i \lbdim(F_i): F\subset\cup_i F_i \},\\
\pdim(F) &= \inf\{ \sup_i \ubdim(F_i): F\subset\cup_i F_i \}.
\end{align*}
The inequalities $\hdim(F)\le \mlbdim(F)\le \lbdim(F)\le \ubdim(F)$ and $\mlbdim(F)\le \pdim(F)\le \ubdim(F)$ always hold, while $\lbdim$ and $\pdim$ are not comparable in general.

We move on to dimensions of measures. Let $\mu\in\cP(\R^d)$. The \textbf{lower and upper entropy dimensions} are defined as
\begin{align*}
\ledim(\mu) &= \liminf_{k\to\infty} H_k(\mu),\\
\uedim(\mu) &= \limsup_{k\to\infty} H_k(\mu).
\end{align*}

The \textbf{Hausdorff dimension} of $\mu\in\cM(\R^d)$ is
\[
\hdim(\mu) = \inf\{\hdim(A):\mu(A)>0\}.
\]
We note that this is sometimes called the \emph{lower} Hausdorff dimension. Finally, we recall that $\mu\in\cM(\R^d)$ is called \textbf{exact dimensional} if there exists $s\ge 0$ (the \emph{exact dimension} of $\mu$) such that
\[
\lim_{r\downarrow 0} \frac{\log \mu(B(x,r))}{\log r} = s\quad\text{for $\mu$-almost all }x.
\]

For any $\mu\in\cP(\R^d)$ it holds that
\[
\hdim(\mu) \le \ledim(\mu) \le \uedim(\mu),
\]
with strict inequalities possible, see \cite[Theorem 1.3]{FLR02}. However, for measures of exact dimension $s$, there is an equality $\hdim(\mu)=\uedim(\mu)=s$.

\subsection{Global sceneries, entropy and projections}
In this section we recall some results from \cite{Hochman14, Orponen17} (similar ideas go back to \cite{HochmanShmerkin12}). We write $\delta(\omega)$ or $\delta_\omega$ to denote the point mass at $\omega$ (often $\omega$ will be a measure).

We denote the topological support of $\mu\in\cP([0,1)^d)$ in the $\rho$-metric by $\supp_\rho(\mu)$. Note that $x\in\supp_\rho(\mu)$ if and only if $\mu(D_n(x))>0$ for all $n\in\N$. Given $Q\in\cD_n^{(d)}$, let $T_Q$ be the homothety that maps $Q$ onto $[0,1)^d$, and define
\[
\mu^Q = T_Q\left( \frac{\mu|_{Q}}{\mu(Q)}\right).
\]
If $x\in\supp_\rho(\mu)$, we also write $\mu^{x,n}=\mu^{D_n(x)}$ for short. That is, $\mu^{x,n}$ is the normalized restriction of $\mu$ to $D_n(x)$, renormalized back to the unit cube.

Given $\mu\in\cP([0,1)^d)$, $x\in\supp_\rho(\mu)$, and an integer interval $[A,B]$, we write
\begin{align*}
\la \mu,x\ra_{[A,B]} &= \frac{1}{B-A} \sum_{n=A}^{B-1} \delta(\mu^{x,n}),\\
\la \mu \ra_n &= \int \delta(\mu^{x,n})\,d\mu(x) = \sum_{Q\in\cD_n} \mu(Q) \delta(\mu^Q),\\
\la \mu \ra_{[A,B]} &= \int \la \mu,x\ra_{[A,B]} \,d\mu(x) = \frac{1}{B-A} \sum_{n=A}^{B-1} \la\mu\ra_n.
\end{align*}
The second equality in the last line follows from interchanging the order of sum and integration; it will be convenient to alternatively use either definition of $\la \mu \ra_{[A,B]}$.

The following simple but important fact is proved in \cite[Lemma 3.4]{Hochman14}. It allows to recover the global entropy of a measure from local entropies.
\begin{lemma} \label{lem:local-entropy-from-global-entropy}
Let $\mu\in\cP([0,1)^d)$. Then
\[
\left|H_N(\mu) - \int H_q(\eta)\, d\la \mu\ra_{[0,N]}(\eta) \right| = O_d(q/N).
\]
\end{lemma}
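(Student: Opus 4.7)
My plan is to unfold the definitions so that the integral becomes a telescoping Cesàro average of global dyadic entropies, and then control the boundary terms using the trivial bound that refining $\cD_n$ to $\cD_{n+1}$ increases entropy by at most $d$.

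Write $h_n = H(\mu, \cD_n)$ for brevity. The first step is to recognize that for each fixed $n$, the map $T_Q$ carries the partition $\cD_q$ back to $\cD_{n+q}$ restricted to $Q$, hence $H(\mu^Q, \cD_q) = H(\mu_Q, \cD_{n+q}|_Q)$, and averaging against $\mu(Q)$ over $Q \in \cD_n$ gives precisely the conditional entropy $H(\mu, \cD_{n+q} \mid \cD_n) = h_{n+q} - h_n$ by the chain rule. So
\[
\int H_q(\eta)\, d\la \mu\ra_n(\eta) = \frac{h_{n+q} - h_n}{q},
\]
and averaging over $n \in [0,N-1]$ yields
\[
\int H_q(\eta)\, d\la \mu\ra_{[0,N]}(\eta) = \frac{1}{Nq}\sum_{n=0}^{N-1}(h_{n+q}-h_n).
\]

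The second step is to telescope: this equals $\frac{1}{Nq}\bigl(\sum_{n=N}^{N+q-1} h_n - \sum_{n=0}^{q-1} h_n\bigr)$. Now I use two routine facts: $0 \le h_n \le nd$ (since $\mu$ is supported on $2^{nd}$ level-$n$ cubes in $[0,1)^d$), and $0 \le h_{n+1}-h_n \le d$ (each cube splits into $2^d$ children). The second inequality implies $|h_n - h_N| \le qd$ for $n \in [N, N+q-1]$, so $\sum_{n=N}^{N+q-1} h_n = q h_N + O(q^2 d)$; the first implies $\sum_{n=0}^{q-1} h_n = O(q^2 d)$. Dividing by $Nq$ gives $H_N(\mu) + O_d(q/N)$, which is the desired identity.

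The main obstacle is essentially bookkeeping rather than anything conceptual: one needs to match the rescaling convention for $\mu^Q$ with the dyadic partition $\cD_q$ correctly to recognize the conditional entropy, and then handle the two boundary sums at $n < q$ and $n \in [N, N+q-1]$. Both are controlled uniformly in $\mu$ by the ambient dimension $d$ alone, which is why the error absorbs only the constant $d$ into $O_d(\cdot)$.
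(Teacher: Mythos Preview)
Your argument is correct. The paper does not actually give a proof of this lemma but simply cites \cite[Lemma 3.4]{Hochman14}; your telescoping computation via the chain rule $H(\mu,\cD_{n+q}\mid\cD_n)=h_{n+q}-h_n$ is exactly the standard proof of that result, so there is nothing further to compare.
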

In the above lemma, one should think that the value of $q$ is fixed, and $N$ tends to infinity (possibly along a subsequence).

The following is a variant of a result of Orponen \cite{Orponen17}, which in turn adapts ideas of Hochman \cite{Hochman14}.
\begin{prop} \label{prop:projected-entropies-from-local-entropies}
Fix $2< q< N$. Let $\mu\in\cP([0,1)^2)$, and let $U$ be an open set containing $\supp(\mu)$. Suppose that $f: U\to \R$ is a $C^1$ map such that, for some fixed $v\in S^1$,
\[
\|Df(x)-v\| \le 2^{-q} \quad\text{for all } x\in\supp(\mu).
\]
Then, if $\Pi_v(x)= v \cdot x $ denotes the orthogonal projection of $x$ onto a line in direction $v$,
\[
H_N(f\mu) \ge \int  H_q(\Pi_v\eta) \,d\la \mu\ra_{[0,N]}(\eta) - O(q/N) - O(1/q).
\]
The constants in the $O$ notation are absolute.
\end{prop}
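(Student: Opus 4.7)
The plan is to combine Lemma~\ref{lem:local-entropy-from-global-entropy}, applied to the one-dimensional push-forward $f\mu\in\cP(\R)$, with a scale-by-scale comparison between the scenery of $f\mu$ on the line and the $\Pi_v$-image of the scenery of $\mu$ on the plane. The comparison will exploit the first-order approximation $f(y)=f(x_Q)+v\cdot(y-x_Q)+O(2^{-q-n})$ valid on any dyadic cube $Q\in\cD_n^{(2)}$ meeting $\supp(\mu)$, so that each scale contributes an error of only $O(1/q)$.

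By Lemma~\ref{lem:local-entropy-from-global-entropy} applied to $f\mu$,
\[
H_N(f\mu) \;\ge\; \int H_q(\xi)\,d\la f\mu\ra_{[0,N]}(\xi) \;-\; O(q/N),
\]
so it suffices to prove the per-scale inequality
\[
\sum_{I\in\cD_n^{(1)}} f\mu(I)\,H_q\!\bigl((f\mu)^I\bigr) \;\ge\; \sum_{Q\in\cD_n^{(2)}} \mu(Q)\,H_q\!\bigl(\Pi_v\mu^Q\bigr) \;-\; O(1/q)
\]
for each $0\le n<N$, since averaging over $n$ yields the required comparison of the two scenery integrals.

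Fix $n$ and $Q\in\cD_n^{(2)}$ with $\mu(Q)>0$, pick $x_Q\in Q$, and set $\tilde f_Q(y)=f(x_Q)+v\cdot(y-x_Q)$. The hypothesis $\|Df-v\|\le 2^{-q}$ and the mean value theorem give $\|f-\tilde f_Q\|_\infty\le \sqrt 2\,2^{-q-n}$ on $Q$, so $f\mu_Q$ and $\tilde f_Q\mu_Q$ are within $W_\infty$-distance $O(2^{-q-n})$; a standard shift-of-partition argument shows that this causes at most an $O(1)$ change in entropy against $\cD_{n+q}^{(1)}$. Since $\tilde f_Q\mu_Q$ is $\Pi_v\mu_Q$ translated by the constant $f(x_Q)-v\cdot x_Q$, and the rescaling $T_Q$ that maps $Q$ onto $[0,1)^2$ converts $\Pi_v\mu_Q$ into an affine image of $\Pi_v\mu^Q$ on $\R$, one obtains
\[
H\!\bigl(f\mu_Q,\cD_{n+q}^{(1)}\bigr) \;\ge\; H\!\bigl(\Pi_v\mu^Q,\cD_q^{(1)}\bigr) - O(1) \;=\; q\,H_q\!\bigl(\Pi_v\mu^Q\bigr) - O(1).
\]
Because $\|Df\|=1+O(2^{-q})$, the set $f(Q)$ lives in $O(1)$ cells of $\cD_n^{(1)}$, whence also $H(f\mu_Q,\cD_n^{(1)})=O(1)$.

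To reassemble the per-scale inequality, write $(f\mu)^I=\sum_Q\lambda_Q^I\,T_I\bigl((f\mu_Q)_I\bigr)$ with weights $\lambda_Q^I=\mu(Q)f\mu_Q(I)/f\mu(I)$. Concavity of $H(\cdot,\cD_q^{(1)})$ followed by the chain rule $H(\nu,\cD_{n+q}^{(1)}\mid\cD_n^{(1)})=H(\nu,\cD_{n+q}^{(1)})-H(\nu,\cD_n^{(1)})$ gives
\[
\sum_I f\mu(I)\,H_q\!\bigl((f\mu)^I\bigr) \;\ge\; \frac{1}{q}\sum_Q \mu(Q)\,\bigl[H(f\mu_Q,\cD_{n+q}^{(1)}) - O(1)\bigr] \;\ge\; \sum_Q \mu(Q)\,H_q\!\bigl(\Pi_v\mu^Q\bigr) - O(1/q),
\]
as needed. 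The main obstacle is the transport-to-entropy step in the third paragraph: one must verify that an $O(2^{-n-q})$ $W_\infty$-perturbation produces only an $O(1)$-bit change in $H(\cdot,\cD_{n+q}^{(1)})$, uniformly in $n$ and $q$. The remaining affine bookkeeping linking $\Pi_v\mu_Q$, $\tilde f_Q\mu_Q$ and $\Pi_v\mu^Q$ is routine but must be tracked so that the errors stay of size $O(1)$ rather than growing with $n$ or $N$.
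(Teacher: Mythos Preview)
Your argument is correct and converges to the same per-scale inequality the paper uses; from that point on the two proofs are identical. Your $W_\infty$-to-entropy step is precisely \cite[Lemma~3.12]{Orponen17} (the paper's \eqref{eq:projected-entropies-2}), and your affine bookkeeping relating $\Pi_v\mu_Q$ to $\Pi_v\mu^Q$ is \cite[Remark~3.6]{Orponen17} (the paper's \eqref{eq:projected-entropies-3}).

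The one genuine difference is how the multiscale lower bound
\[
H_N(f\mu)\;\ge\;\frac{1}{qN}\sum_{n}\sum_{Q\in\cD_n^{(2)}}\mu(Q)\,H\bigl(f\mu_Q,\cD_{n+q}^{(1)}\mid\cD_n^{(1)}\bigr)\;-\;O(q/N)
\]
is reached. The paper quotes Orponen's Lemma~3.5, which gives the bound only at scales that are multiples of $q$, and then averages over the $q$ possible starting offsets $j\in[q]$ using concavity of $H_N(f\cdot)$ at the outermost level. You instead apply Lemma~\ref{lem:local-entropy-from-global-entropy} directly to the one-dimensional push-forward $f\mu$ to get all scales at once, and then use concavity \emph{inside} each scale to pass from dyadic intervals $I\in\cD_n^{(1)}$ back to squares $Q\in\cD_n^{(2)}$. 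Your route is slightly more self-contained (no need for Orponen's Lemma~3.5 as a black box); the only extra care required is that $f\mu$ is supported on an interval of length $O(1)$ rather than on $[0,1)$, but this costs only a harmless $O(1/N)$ when invoking Lemma~\ref{lem:local-entropy-from-global-entropy}.
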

\begin{proof}
Orponen \cite[Lemma 3.5]{Orponen17} showed that if $1\le q<N$ and $\nu\in\cP(U)$, then
\begin{equation} \label{eq:projected-entropies-0}
H_N(f\nu) \ge \frac{1}{N} \sum_{\ell=0}^{\lfloor N/q\rfloor -1} \sum_{D\in\cD_{\ell q}^{(2)}} \nu(D) \, H(f\nu_D,\cD_{(\ell+1)q}|\cD_{\ell q}),
\end{equation}
where the sum runs over $D$ with $\nu(D)>0$.

By concavity of entropy, if $\wt{D}\in \cD_j^{(2)}$, then
\[
H_N(f\mu) \ge  \mu(\wt{D}) H_N(f\mu_{\wt{D}}).
\]
Applying \eqref{eq:projected-entropies-0} to each $\nu=\mu_{\wt{D}}$ with $\wt{D}\in\cD_j^{(2)}$, $j\in [q]$, and adding up, and then averaging over $j$, we get
\begin{equation} \label{eq:projected-entropies-1}
H_N(f\mu) \ge \frac{1}{N} \sum_{i=0}^{N-q} \sum_{D\in\cD_i^{(2)}} \mu(D) \,\frac{1}{q} H(f\mu_D,\cD_{i+q}|\cD_i).
\end{equation}
On the other hand, by \cite[Lemma 3.12]{Orponen17}, the almost linearity hypothesis on $f$ ensures that
\begin{equation}  \label{eq:projected-entropies-2}
|H(f\mu_D,\cD_{i+q}|\cD_i) - H(\Pi_v\mu_D,\cD_{i+q}|\cD_i)| = O(1)
\end{equation}
for any $D\in\cD_i^{(2)}$. (This was stated in \cite{Orponen17} for $i$ a multiple of $q$ but the proof in general is identical.)

Finally, as observed in \cite[Remark 3.6]{Orponen17}, the linearity of $\Pi_v$ implies that
\begin{equation} \label{eq:projected-entropies-3}
\frac{1}{q} H(\Pi_v\mu_D,\cD_{i+q}|\cD_i) \ge H_q(\Pi_v\mu^D) - O(1/q).
\end{equation}

Putting together \eqref{eq:projected-entropies-1}, \eqref{eq:projected-entropies-2} and \eqref{eq:projected-entropies-3} yields the claim.
\end{proof}

We will apply the above proposition to functions $f$ of the form $\phi_x(y)=\tfrac12|x-y|$. Let $\sigma(x,y)=(x-y)/|x-y|\in S^1\subset \R^2$ be the direction generated by $x\neq y$, and note that $Df_x(y)=\sigma(x,y)$. Hence, we have the following corollary of Proposition \ref{prop:projected-entropies-from-local-entropies}.

\begin{corollary} \label{cor:lower-bound-entropy-pinned-dist}
Fix $x\in\R^2$, $D\in\cD_k^{(2)}$, $v\in S^1$ and $q\gg 1$ such that $|\sigma(x,y)-v| \le 2^{-q}$ for all $y\in D$. Then for any $\mu\in\cP([0,1)^2)$ supported on $D$ and any $N\ge q$,
\[
H_N(\phi_x\mu) \ge \int  H_q(\Pi_v\eta) \,d\la \mu\ra_{[0,N]}(\eta) - O(q/N) - O(1/q).
\]
\end{corollary}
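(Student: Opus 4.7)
The plan is to apply Proposition \ref{prop:projected-entropies-from-local-entropies} directly to the map $f=\phi_x$ with $\supp(\mu)\subset D$. The only real work is to translate the hypothesis on spherical projections into the derivative condition required by the proposition, and to choose a suitable open set on which everything is smooth.

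First, I would fix an open set $U\supset\supp(\mu)$ on which $\phi_x$ is $C^1$. Since the hypothesis $|\sigma(x,y)-v|\le 2^{-q}$ presupposes that $\sigma(x,y)$ is defined for every $y\in D$, we have $x\notin\overline{D}$, so one can choose $U$ to be a small open neighborhood of $D$ that avoids $x$; on such $U$ the map $\phi_x(y)=\tfrac12|x-y|$ is smooth. A direct computation gives
\[
D\phi_x(y) \;=\; \tfrac12\,\frac{y-x}{|x-y|} \;=\; -\tfrac12\,\sigma(x,y).
\]
Up to the scalar factor $-\tfrac12$, the derivative is exactly the unit direction $\sigma(x,y)$. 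Replacing $\phi_x$ by $2\phi_x(y)=|x-y|$ changes dyadic entropies by only $O(1)$, which gets absorbed in the $O(1/q)$ error, so I may as well assume $D\phi_x(y)=-\sigma(x,y)\in S^1$. Using continuity of $\sigma(x,\cdot)$, I would then shrink $U$ slightly so that $\|D\phi_x(y)-(-v)\|\le 2\cdot 2^{-q}$ for all $y\in U$, costing only a harmless constant factor in the hypothesis of the proposition.

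With this setup, Proposition \ref{prop:projected-entropies-from-local-entropies} applied with the unit vector $-v$ yields
\[
H_N(\phi_x \mu) \;\ge\; \int H_q(\Pi_{-v}\eta)\,d\la\mu\ra_{[0,N]}(\eta) \;-\; O(q/N) \;-\; O(1/q).
\]
To conclude, I would observe that $\Pi_{-v}\eta$ and $\Pi_v\eta$ differ by the reflection $t\mapsto -t$, so their scale-$q$ dyadic entropies coincide up to an additive $O(1)$, which is again absorbed into the error term. I do not anticipate any genuine obstacle: the corollary is essentially a packaging of Proposition \ref{prop:projected-entropies-from-local-entropies} in a form convenient for pinned distance sets. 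The only care required is the bookkeeping around the normalization of the derivative (the $\tfrac12$ factor) and extending the ``almost constancy'' of $\sigma(x,\cdot)$ from $D$ to an open neighborhood, both of which are routine.
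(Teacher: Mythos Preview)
Your proposal is correct and is exactly the paper's approach: the paper states the corollary as an immediate consequence of Proposition~\ref{prop:projected-entropies-from-local-entropies} after observing that the derivative of $\phi_x$ is (essentially) $\sigma(x,\cdot)$. In fact you are more careful than the paper, which simply asserts $Df_x(y)=\sigma(x,y)$ and glosses over the sign and the factor $\tfrac12$ that you correctly handle; one minor simplification is that Proposition~\ref{prop:projected-entropies-from-local-entropies} only requires the derivative bound on $\supp(\mu)\subset D$, not on all of $U$, so your extension-to-a-neighborhood step is unnecessary.
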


\subsection{CP processes}

Following  \cite{Furstenberg08}, we consider CP processes on the tree $([0,1)^d,\rho)$ rather than on Euclidean cubes; the dyadic metric helps avoid technicalities with functions that would not be continuous on Euclidean space (due to dyadic hyperplanes) but are on the tree, notably entropy. We will denote the induced weak topology on $\cP([0,1)^d)$ by $\widetilde{\rho}$, and the weak topology induced by this on $\cP(\cP([0,1)^d))$ by $\widehat{\rho}$. Slightly abusing notation, we will also denote by $\widetilde{\rho}$ the product topology $\widetilde{\rho}\times \rho$ on $\cP([0,1)^d)\times [0,1)^d$, and by $\widehat{\rho}$ the corresponding weak topology on $\cP(\cP([0,1)^d)\times [0,1)^d)$. We note that all these topological spaces are compact and metrizable.  To avoid any ambiguity, we will occasionally denote the topology under consideration with a subscript.

We let $S:[0,1)^d\to [0,1)^d$, $S(x)=2x\bmod 1$ be the doubling map.
\begin{definition}[CP magnification operator]
Let
\[
\Xi = \left\{ (\mu,x)\in \cP([0,1)^d)\times [0,1)^d: x\in\supp_\rho(\mu) \right\}.
\]
We define the \textit{CP magnification operator} $M$ on $\Xi$ by
\[
M(\mu,x)=   (\mu^{x,1}, Sx).
\]
\end{definition}
Note that $M^n(\mu,x)=(\mu^{x,n},S^n x)$.

We now define CP distributions (we refer to probability measures on ``large'' probability spaces such as $\Xi$ as distributions). This definition goes back to \cite{Furstenberg08}; see \cite{HochmanShmerkin12} and \cite{KSS15} for some variants and generalizations.

\begin{definition}[CP distributions]
A distribution $Q$ on $\Xi$ is \emph{adapted}, if there is a disintegration
\begin{equation} \label{eq:adapted}
\int f(\nu,x) \,\mathrm{d}Q(\nu,x) = \iint f(\nu,x)\,\textrm{d}\nu(x) \,\textrm{d}\overline{Q}(\nu),
\end{equation}
for all $f\in C_{\widetilde{\rho}}(\cP([0,1)^d)\times [0,1)^d)$, where $\overline{Q}$ is the projection of $Q$ onto the measure component.

A distribution on $\Xi$ is a \emph{CP distribution} (CPD) if it is $M$-invariant (that is, $MQ=Q$) and adapted.
\end{definition}

Note that adaptedness can be interpreted in the following way: in order to sample a pair $(\mu,x)$ from the distribution $Q$, we have to first sample a measure $\mu$ according to $\overline{Q}$, and then sample a point $x$ using the chosen distribution $\mu$. From now on we will denote by $Q$ both the CPD acting on $\Xi$ and its measure component acting on $\cP([0,1)^d)$, since by adaptedness the latter determines the former.

An easy consequence of the Ergodic Theorem applied to CP distributions is that if $P$ is a CPD which is ergodic under the action of $M$, then $P$-a.e. $\nu$ is exact dimensional, and has dimension
\[
\dim P = \int H_q(\eta)\,dP(\eta)
\]
for any $q\in\N$ (see e.g. \cite[Equation (2.7)]{Furstenberg08}). Let $P = \int P_\mu \, d P(\mu)$ be the ergodic decomposition of $P$ (that is, each $P_\mu$ is $M$-invariant and ergodic, and $\mu\mapsto P_\mu$ is a Borel mapping).  By general properties of Markov processes, $P_\mu$ is again a CPD for $P$-almost all $\mu$, see e.g.  \cite[Remark before Proposition 5.2]{Furstenberg08}. Hence, if $P$ is a (non-necessarily ergodic) CPD, then $P$-a.e. $\nu$ is still exact-dimensional, but $\dim\nu$ needs no longer be $P$-a.e. constant.

\begin{definition}
If $P$ is a CP distribution, we define its \textbf{lower dimension} $\dim_*P$ as the $P$-essential infimum of $\dim\nu$.
\end{definition}

We turn to the behavior of entropy under projections. For this, we recall some results from \cite{HochmanShmerkin12} on CP-processes and projections. Recall that $\Pi_v(x)=\langle v,x\rangle$, $v\in S^1$. Elementary properties of entropy imply that
\begin{equation} \label{eq:continuity-of-projected-entropy}
|H_q(\Pi_v\eta)-H_q(\Pi_{v'}\eta)|  \le O(1/q)  \text{ if } |v-v'|\le 2^{-q},
\end{equation}
with the $O(\cdot)$ constant independent of $\eta$. Indeed, $H_q(\Pi_v\eta)=\frac1q H(\eta, \Pi_v^{-1}\mathcal{D}_q)$ and likewise with $v'$. But if $|v-v'|\le 2^{-q}$, then each element of $\Pi_v^{-1}\mathcal{D}_q$ hits $O(1)$ elements of $\Pi_{v'}^{-1}\mathcal{D}_q$ and vice-versa, so \eqref{eq:continuity-of-projected-entropy} follows.

The following result is a consequence of \cite[Theorem 8.2]{HochmanShmerkin12}. It will act as our projection theorem for entropy.
\begin{theorem} \label{thm:projections-CPD}
Let $P$ be a (not necessarily ergodic) CP-distribution. Write $\cE_q:S^1\to [0,1]$, $v\mapsto \int \min(H_q(\Pi_v\eta),1)\,dP(\eta)$. Then:
\begin{enumerate}
\item The function $\cE_q$ satisfies
\[
|\cE_q(v)-\cE_q(v')| \le O(1/q)  \text{ if } |v-v'|\le 2^{-q};
\]
\item The limit $\cE(v):=\lim_{q\to\infty} \cE_q(v)$ exists for all $v$ and $\cE(v)$ is lower semicontinuous;
\item $\cE(v)\ge \min(\dim_*P,1)$ for almost all $v$;
\end{enumerate}
\end{theorem}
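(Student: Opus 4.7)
Part (1) is immediate by integrating \eqref{eq:continuity-of-projected-entropy} against $dP(\eta)$ and using that $x\mapsto\min(x,1)$ is $1$-Lipschitz; chaining this $2^{-q}$-modulus of continuity along arcs of $S^1$ further shows that each $\cE_q$ is (globally Lipschitz, hence) continuous on $S^1$.

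For the existence of the limit in Part (2), the plan is to derive an almost-monotonicity estimate. Apply Proposition \ref{prop:projected-entropies-from-local-entropies} to $f=\Pi_v$: since $Df\equiv v$ the hypothesis is vacuously satisfied, giving
\[
H_N(\Pi_v\eta)\ge \int H_q(\Pi_v\xi)\,d\la\eta\ra_{[0,N]}(\xi) - O(q/N) - O(1/q),\qquad q\le N.
\]
Integrating against $dP(\eta)$, the right-hand double integral collapses, using adaptedness \eqref{eq:adapted} and $M$-invariance:
\[
\iint H_q(\Pi_v\xi)\,d\la\eta\ra_{[0,N]}(\xi)\,dP(\eta)=\frac{1}{N}\sum_{n=0}^{N-1}\int H_q(\Pi_v\nu^{x,n})\,dP(\nu,x)=\int H_q(\Pi_v\nu)\,dP(\nu).
\]
Since $\Pi_v\eta$ is supported on an interval of bounded length, $H_q(\Pi_v\eta)\le 1+O(1/q)$, so replacing $H_q$ by $\min(H_q,1)$ distorts each integral by at most $O(1/q)$. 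Consequently
\[
\cE_N(v)\ge \cE_q(v) - O(q/N) - O(1/q)\qquad (q\le N).
\]
Letting $N\to\infty$ followed by $q\to\infty$ forces $\liminf_N \cE_N(v) \ge \limsup_q \cE_q(v)$, so the limit $\cE(v)$ exists. The same inequality, taking $N\to\infty$ only, rewrites as $\cE(v)\ge \cE_q(v)-C/q$ for some absolute $C>0$; combined with $\cE(v)=\lim_q\cE_q(v)$ this yields the representation $\cE(v)=\sup_q(\cE_q(v)-C/q)$, and since each $\cE_q-C/q$ is continuous by Part (1), $\cE$ is lower semicontinuous.

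For Part (3), I would use the ergodic decomposition $P=\int P_\omega\,dP(\omega)$ into ergodic CPDs. Bounded convergence gives $\cE(v)=\int \cE^\omega(v)\,dP(\omega)$ (where $\cE^\omega$ is the limit produced by Part (2) for the ergodic component $P_\omega$), and $\dim_*P=\operatorname{ess\,inf}_\omega \dim P_\omega$ since $\dim\nu=\dim P_\omega$ for $P_\omega$-a.e. $\nu$. For each ergodic $P_\omega$, the projection theorem \cite[Theorem 8.2]{HochmanShmerkin12} (a Marstrand-type statement for CP-distributions) asserts that for Lebesgue-a.e. $v$, $\Pi_v\nu$ is exact-dimensional of dimension at least $\min(\dim P_\omega,1)$ for $P_\omega$-a.e. $\nu$. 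Combined with exact-dimensionality and bounded convergence this upgrades to $\cE^\omega(v)\ge\min(\dim P_\omega,1)$ for a.e. $v$. A routine Fubini argument makes the null exceptional set of $v$'s uniform over $\omega$, and integrating over $\omega$ (using monotonicity of $\min(\cdot,1)$) yields $\cE(v)\ge\min(\dim_*P,1)$ for a.e. $v$.

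The main obstacle will be justifying the collapse of the local-scenery integral in Part (2); once $\Pi_v$ is recognized as exactly linear (so that the linearization error in Proposition \ref{prop:projected-entropies-from-local-entropies} vanishes), adaptedness and $M$-invariance make the computation clean. The other nontrivial input is the Marstrand-type projection theorem for CP-processes invoked as a black box in Part (3); this is where the genuinely ergodic-theoretic content lies, and what allows one to convert the essential-infimum dimension $\dim_*P$ of the CPD into a pointwise lower bound on projected entropies.
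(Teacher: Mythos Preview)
Your argument is correct. Parts (1) and (3) match the paper's approach (trivial integration of \eqref{eq:continuity-of-projected-entropy}, and ergodic decomposition combined with \cite[Theorem 8.2]{HochmanShmerkin12}, respectively). For Part (2), however, you take a genuinely different route: the paper simply cites \cite[Theorem 8.2]{HochmanShmerkin12} as a black box already in the ergodic case (that reference contains both the convergence $\widetilde{\cE}_q\to\cE$ and the lower semicontinuity), and then passes to the non-ergodic case via ergodic decomposition and Fatou. You instead derive the almost-monotonicity $\cE_N(v)\ge\cE_q(v)-O(q/N)-O(1/q)$ directly, by applying Proposition \ref{prop:projected-entropies-from-local-entropies} to the exactly linear map $\Pi_v$ and collapsing the resulting scenery integral via adaptedness and $M$-invariance; this yields existence of the limit and the representation $\cE=\sup_q(\cE_q-C/q)$, hence lower semicontinuity, \emph{without} invoking ergodicity or the external reference at all. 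Your approach is more self-contained within the paper's own framework and makes transparent why the limit exists; the paper's approach is shorter but leans more heavily on \cite{HochmanShmerkin12}. One small caveat: in Part (3) you phrase the content of \cite[Theorem 8.2]{HochmanShmerkin12} in terms of exact-dimensionality of $\Pi_v\nu$, whereas the paper invokes it as the convergence statement $\widetilde{\cE}_q\to\cE$ together with the a.e.\ lower bound; your bounded-convergence bridge between the two formulations is fine, but you are relying on a slightly stronger-sounding form of the cited theorem than the paper does.
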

\begin{proof}
The first claim is immediate from \eqref{eq:continuity-of-projected-entropy}. Let
\[
\wt{\cE}_q(v)=\int H_q(\Pi_v\eta)\,dP(\eta).
\]
Since $\Pi_v\eta$ is supported on an interval of length $\le\sqrt{2}$, $|\wt{\cE}_q(v)-\cE_q(v)|\le 1/q$ for all $v\in S^1$. In the case $P$ is ergodic, the latter claims are a particular case of \cite[Theorem 8.2]{HochmanShmerkin12}. More precisely, in \cite{HochmanShmerkin12}, the stated convergence is $\wt{\cE}_q(v)\to \cE(v)$, but by our observation, this immediately yields $\cE_q(v)\to \cE(v)$. The general case follows by considering the ergodic decomposition of $P$ (notice that an integral of lower semicontinuous functions is lower semicontinuous by Fatou's Lemma).
\end{proof}

\subsection{Global tangents}

We want to be able to estimate the entropy of projections of a given measure $\mu\in\cP([0,1)^2)$, but the tools we have at our disposal concern typical measures for a CP process. Following \cite{Hochman13}, we handle this by passing to suitable tangent objects.

Given $\mu\in\cP([0,1)^d)$, the set of accumulation points of $\la\mu\ra_{[0,N]}$ in the $\widehat{\rho}$ metric will be denoted $\mathcal{T}(\mu)$.  Unlike in \cite{Hochman13}, our tangent distributions are global, rather than local but, as the next lemma shows, they are still CP processes:
\begin{lemma} \label{lem:limits-are-CPDs}
Let $\mu_n$ be a sequence in $\cP([0,1)^d)$. Suppose
\[
\la \mu_{N_j}\ra_{[0,N_j]} \underset{\widehat{\rho}}{\to} P,
\]
for some subsequence $(N_j)$. Then $P$ is a CPD (in the sense that the adapted distribution with measure marginal $P$ is a CPD).

In particular, if $\mu\in\cP([0,1)^d)$, then any element of $\mathcal{T}(\mu)$ is a CPD.
\end{lemma}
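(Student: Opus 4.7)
The plan is to verify the two defining conditions of a CPD --- adaptedness and $M$-invariance --- for the adapted lift $\wt{P}$ on $\Xi$ whose measure marginal is $P$. Adaptedness of $\wt{P}$ holds by fiat, but we must show that $\wt{P}$ is itself the $\wh{\rho}$-limit of the adapted lifts $\wt{Q}_j$ of $\la\mu_{N_j}\ra_{[0,N_j]}$. This follows because adaptedness is a closed condition under weak convergence: for any $f\in C_{\wt{\rho}}(\cP([0,1)^d)\times [0,1)^d)$, the map $\nu\mapsto \int f(\nu,x)\,d\nu(x)$ is continuous on $\cP([0,1)^d)$, as one checks using the uniform continuity of $f$ on the compact product space. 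Hence the disintegration identity \eqref{eq:adapted} passes to the limit.

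For $M$-invariance, the core computation is that $M$ transports $\la\mu\ra_n$ to $\la\mu\ra_{n+1}$ at the level of adapted lifts. Let $\wt{R}_n(\mu)$ denote the adapted lift of $\la\mu\ra_n$: sample $Q\in\cD_n$ with probability $\mu(Q)$, then sample $y$ from $\mu^Q$. Using that $T_Q$ identifies $\cD_{n+1}|_Q$ with $\cD_1$, one computes $(\mu^Q)^{y,1}=\mu^{Q'}$ for $Q'=T_Q^{-1}(D_1(y))\in\cD_{n+1}$, and that conditional on $Q'$, the point $Sy$ has law $\mu^{Q'}$. The joint probability of selecting a given $Q'$ is $\mu(Q)\cdot\mu^Q(T_Q(Q'))=\mu(Q')$. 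Hence $M\wt{R}_n(\mu)=\wt{R}_{n+1}(\mu)$, and averaging over $n\in[0,N]$ yields $M\wt{Q}_j=\wt{Q}_j'$, where $\wt{Q}_j'$ is the adapted lift of $\la\mu_{N_j}\ra_{[1,N_j+1]}$.

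Because $\la\mu_{N_j}\ra_{[0,N_j]}$ and $\la\mu_{N_j}\ra_{[1,N_j+1]}$ differ in total variation by at most $2/N_j$, the same bound holds for their adapted lifts, so $\wt{Q}_j$ and $\wt{Q}_j'$ share the common $\wh{\rho}$-limit $\wt{P}$. To pass $M\wt{Q}_j=\wt{Q}_j'$ to the limit I would invoke continuity of $M$ on $\Xi$ in the $\wt{\rho}$-topology; this in turn rests on the fact that each cube $D_k(x)$ is $\rho$-clopen, so conditioning a measure on it is continuous at measures charging that cube, while $S$ is $2$-Lipschitz on each $D_1$-cell. This yields $M\wt{P}=\wt{P}$, completing the proof. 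The ``in particular'' assertion is the special case $\mu_n\equiv\mu$.

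The main obstacle is the bookkeeping in the pushforward calculation $M\wt{R}_n(\mu)=\wt{R}_{n+1}(\mu)$ and the attendant check that $M$ is continuous on $\Xi$ (and not merely on dense pieces of it); both steps are standard but require care with the dyadic normalizations $T_Q$ and with the possibility that $\wh{\rho}$-limits might put mass on $\{(\mu,x):x\notin\supp_\rho(\mu)\}$, outside $\Xi$, in which case one argues that this does not affect the $M$-invariance identity on the natural adapted subsystem.
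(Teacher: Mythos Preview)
Your proof is correct and follows essentially the same route as the paper: lift to the adapted distributions on $\Xi$, use that adaptedness is a closed condition, verify $M\la\mu\ra^*_{[0,N]}=\la\mu\ra^*_{[1,N+1]}$, and pass to the limit via continuity of $M$. Your stated obstacle about mass possibly landing outside $\Xi$ is handled exactly as in the paper: once $P$ is known to be adapted, $P$-a.e.\ $(\mu,x)$ satisfies $x\in\supp_\rho(\mu)$, so $M$ is well defined and continuous $P$-a.e., and the continuous mapping theorem (e.g., \cite[Theorem~2.7]{Billingsley99}) lets you interchange $M$ with the weak limit.
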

\begin{proof}
Both the claim and the proof are similar to those of \cite[Propositions 5.2]{Furstenberg08}. For $\nu\in\cP([0,1)^d)$, write
\[
\la \nu \ra^*_{[A,B]} = \frac{1}{B-A}\sum_{n=A}^{B-1} \int \delta(M^n(\nu,x)) d\nu(x).
\]
Note that the measure component of $\la \nu \ra^*_{[A,B]}$ is $\la\nu\ra_{[A,B]}$, and that $\la \nu \ra^*_{[A,B]}$ is always adapted.

Now suppose $\la \mu_{N_j}\ra^*_{[0,N_j]}\to P$ in the $\widehat{\rho}$ topology. Since adaptedness is a closed property (it is tested on equalities of continuous functions), $P$ is adapted.

Since we are using the dyadic metric and $M$ is adapted, $M$ is well defined and continuous at $P$-a.e. $(\mu,x)$ (notice that $x\in\supp_\rho(\mu)$ for $P$-a.e. $(\mu,x)$ by adaptedness). Using standard properties of weak convergence (see e.g. \cite[Theorem 2.7]{Billingsley99}) we conclude that
\begin{align*}
MP &=  M\left(\lim_{j\to\infty} \la \mu_{N_j}\ra^*_{[0,N_j]}\right) \\
&= \lim_{j\to\infty}  M(\la \mu_{N_j}\ra^*_{[0,N_j]})\\
&= \lim_{j\to\infty}  \la \mu_{N_j}\ra^*_{[1,N_j+1]}\\
&= \lim_{j\to\infty}  \la \mu_{N_j} \ra^*_{[0,N_j]} = P.
\end{align*}
\end{proof}

\section{Ahlfors regularity and weak regularity}
\label{sec:Ahlfors-regularity}

The following definition introduces a notion of regularity that, as we will see, extends the concept of Ahlfors-regularity in a suitable sense.
\begin{definition}
\begin{enumerate}
\item A measure $\mu\in\cP([0,1)^d)$ is said to be \textbf{$s$-rich at resolution $(N,q,\delta)$} if
\[
\la \mu \ra_{[0,N]}\{\eta: H_q(\eta) < s-\delta\} < \delta.
\]
\item
A measure $\mu\in\cP([0,1)^d)$ is said to be \textbf{weakly $s$-regular} if for every $\delta>0$ there is $q\in\N$ such that $\mu$ is $s$-rich at resolution $(N,q,\delta)$ for all sufficiently large $N$ (depending on $q$ and $\delta$).
\end{enumerate}
\end{definition}
Note that if a measure is weakly $s$-regular then it is weakly $t$-regular for all $t<s$. In other words, weak $s$-regularity ensures a minimum level of local entropy at most places and scales, but allows for  higher entropy as well.

A first useful feature of weak $s$-regularity is robustness under passing to subsets of positive measure:
\begin{lemma} \label{weak-reg-subsets-of-positive-meas}
If $\mu$ is weakly $s$-regular and $\mu(A)>0$, then $\mu_A$ is weakly $s$-regular.
\end{lemma}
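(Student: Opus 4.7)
The plan is to compare the local measures $(\mu_A)^Q$ and $\mu^Q$ at each dyadic cube $Q$ and to exploit that, for $\mu_A$-typical $x$, the density $\mu(A\cap D_n(x))/\mu(D_n(x))$ tends to $1$ by the martingale convergence theorem applied to $\mathbf{1}_A$ and the dyadic filtration. The key algebraic point is the identity $(\mu_A)^Q=(\mu^Q)_{T_Q(A\cap Q)}$, since both sides reduce to $T_Q(\mu|_{A\cap Q}/\mu(A\cap Q))$. Hence $(\mu_A)^Q$ is simply the normalized restriction of $\mu^Q$ to a subset of $\mu^Q$-mass $p_Q:=\mu(A\cap Q)/\mu(Q)$.

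Writing $\mu^Q=p_Q(\mu_A)^Q+(1-p_Q)\nu$ for an appropriate $\nu\in\cP([0,1)^d)$, applying the elementary inequality $H(p\mu_1+(1-p)\mu_2,\cF)\le pH(\mu_1,\cF)+(1-p)H(\mu_2,\cF)+H_{\mathrm{bin}}(p)$ with $\cF=\cD_q$, and using $H(\nu,\cD_q)\le qd$ together with a straightforward rearrangement, one derives
\[
H_q\bigl((\mu_A)^Q\bigr)\ge H_q(\mu^Q)-\phi(1-p_Q),\qquad \phi(\alpha):=\alpha d+H_{\mathrm{bin}}(\alpha),
\]
a bound which is \emph{uniform in $q\ge 1$} and which has $\phi(\alpha)\to 0$ as $\alpha\downarrow 0$. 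Separately, the Lebesgue (equivalently, martingale) differentiation theorem gives $p_{D_n(x)}\to 1$ as $n\to\infty$ for $\mu$-a.e.\ $x\in A$. Consequently, for any $\alpha>0$ the bad set $E_n:=\{x\in A:p_{D_n(x)}<1-\alpha\}$ satisfies $\mu_A(E_n)\to 0$, and hence $\frac{1}{N}\sum_{n=0}^{N-1}\mu_A(E_n)\to 0$ as $N\to\infty$ by the Ces\`aro theorem.

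To assemble, given $\delta>0$ I would choose $\alpha>0$ so small that $\phi(\alpha)<\delta/2$ and set $\delta':=\min(\delta\mu(A)/3,\delta/2)$. Weak $s$-regularity of $\mu$ provides some $q\in\N$ with $\la\mu\ra_{[0,N]}\{\eta:H_q(\eta)<s-\delta'\}<\delta'$ for all large $N$. The entropy inequality forces $H_q((\mu_A)^Q)<s-\delta$ to entail either $H_q(\mu^Q)<s-\delta'$ or $p_Q<1-\alpha$. The first event has $\la\mu_A\ra_{[0,N]}$-measure at most $\delta'/\mu(A)\le\delta/3$ via the pointwise bound $\mu_A(Q)\le\mu(Q)/\mu(A)$; the second contributes at most $\delta/3$ for $N$ large. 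Summing yields $\la\mu_A\ra_{[0,N]}\{\eta:H_q(\eta)<s-\delta\}<\delta$, so $\mu_A$ is $s$-rich at resolution $(N,q,\delta)$ for all large $N$. The main subtlety is the insistence on uniformity in $q$ in the entropy bound: weak $s$-regularity supplies only some $q$ per error tolerance, not an arbitrarily large one, and the entropy loss from restricting $\mu^Q$ must therefore be controlled independently of $q$, which is exactly what $\phi$ delivers.
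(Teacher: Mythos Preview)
Your proof is correct. Both your argument and the paper's rest on the Lebesgue/martingale density theorem, but the technical execution differs. The paper observes that $\mu_A^{x,n}$ and $\mu^{x,n}$ are $\widetilde\rho$-asymptotic for $\mu$-a.e.\ $x\in A$ and then invokes the $\widetilde\rho$-continuity of $H_q$ (for the single $q$ supplied by weak regularity) to transfer membership in $\Omega_\delta$ to membership in $\Omega_{2\delta}$; this is a soft topological transfer. You instead derive the explicit inequality $H_q((\mu_A)^Q)\ge H_q(\mu^Q)-\phi(1-p_Q)$ with $\phi(\alpha)=\alpha d+H_{\mathrm{bin}}(\alpha)$, uniformly in $q$, and control the density defect $1-p_Q$ directly. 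Your route is more elementary and self-contained (it never needs the $\widetilde\rho$ metric) and is more quantitative; the paper's route is slicker in context because the $\widetilde\rho$ machinery is already in place. Your closing remark about needing uniformity in $q$ is a genuine point, though the paper's version also avoids the circularity since $q$ is fixed (as a function of $\delta$) before continuity of $H_q$ is invoked.
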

\begin{proof}
This is essentially a consequence of the Lebesgue density theorem (which for the dyadic metric is an immediate consequence of the convergence of conditional expectation given the dyadic filtration). Fix $\delta>0$, and let $q$ be such that $\mu$ is $s$-rich at resolution $(N,q,\delta)$ for all sufficiently large $N$. Write $\Omega_\kappa=\{ \eta: H_q(\eta)>s-\kappa\}$. Then we have
\begin{equation} \label{eq:consequence-s-rich}
\int_B \la \mu,x\ra_{[0,N]}(\Omega_\delta)\,d\mu(x) > \mu(B)-\delta
\end{equation}
for any Borel set $B$, provided $N$ is large enough depending on $\delta$ and $q$ only. By the density point theorem, for $\mu$ almost all $x\in A$, the sequences $\mu_A^{x,n}$ and $\mu^{x,n}$ are $\wt{\rho}$-asymptotic (i.e. $\wt{\rho}(\mu_A^{x,n},\mu^{x,n})\to 0$). In particular, if $N$ is large enough (depending on $\delta$), we then have $\mu_A(B)>1-\delta$, where
\[
B=\left\{ x: \la \mu_A,x \ra_{[0,N]}(\Omega_{2\delta}) \ge  \la \mu,x \ra_{[0,N]}(\Omega_{\delta})\right\}.
\]
Here we used that $H_q$ is continuous on $(\cP([0,1)^d),\widetilde{\rho})$. Recalling \eqref{eq:consequence-s-rich} we conclude that, always assuming $N$ is large enough,
\begin{align*}
\la \mu_A \ra_{[0,N]}(\Omega_{2\delta}) &\ge \frac{1}{\mu(A)}\int_B \la \mu_A,x \ra_{[0,N]}(\Omega_{2\delta})d\mu(x)\\
&\ge \frac{1}{\mu(A)}(\mu(B)-\delta) \ge 1-(1+\mu(A)^{-1})\delta.
\end{align*}
This gives the claim.
\end{proof}

Recall that  $\mu\in\cP(\R^d)$ is called $(s,C)$-Ahlfors regular if  $C^{-1} r^s \le \mu(B(x,r)) \le C r^s$ for all $x\in \supp(\mu)$ and all $r\in (0,1]$. If this holds only for $r\in [2^{-N},1]$, we say that $\mu$ is \textbf{$(s,C)$-Ahlfors regular at scale $2^{-N}$}. We also say that a set $A$ is $(s,C)$-Ahlfors regular if the restriction $\mathcal{H}^s|_A$ is a positive finite $(s,C)$-Ahlfors regular measure.

Given a discrete $(s,C)$-Ahlfors regular set at scale $2^{-N}$ contained in $[0,1]^d$, we can construct a measure $\mu$ in the following manner:
\begin{equation} \label{eq:AR-measure-from-AR-set}
\mu = \mu^A =  \frac{1}{|A|}\sum_{D\in\cD_N}  |A\cap D| \,\mathcal{L}_D,
\end{equation}
where $\mathcal{L}$ denotes $d$-dimensional Lebesgue measure. Reciprocally, from a measure $\mu$ supported on $[0,1]^d$ which is $(s,C)$-Ahlfors regular at scale $2^{-N}$, one can construct the set
\[
A = A^\mu =  \{ x_L(D) : D\in\cD_N, \mu(D)>0 \},
\]
where $x_L(D)$ is the left-endpoint of $D$. One then has the following easy lemma:

\begin{lemma} \label{lem:Ahlfors-set-to-measure}
\begin{enumerate}
\item If $\mu\in\cP([0,1]^d)$ is $(s,C)$-Ahlfors regular at scale $2^{-N}$, then $A^\mu$ is discrete $(s,O(C))$-Ahlfors regular at scale $2^{-N}$.
\item Conversely, if $A\subset [0,1]^d$ is discrete $(s,C)$-Ahlfors regular at scale $2^{-N}$, then $\mu^A$ is $(s,O(C))$-Ahlfors regular at scale $2^{-N}$.
\end{enumerate}
The implicit constants depend only on the ambient dimension $d$.
\end{lemma}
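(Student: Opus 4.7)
The plan is to treat both directions as two faces of the same dictionary: for an Ahlfors-regular measure $\mu$ at scale $2^{-N}$, the mass $\mu(B(x,r))$ and the number of dyadic $N$-cubes of positive $\mu$-mass with left endpoint in $B(x,r)$ should agree up to multiplicative constants of the form $C^{O(1)}$ times $2^{-Ns}$. The key ingredient in both directions is a packing argument comparing masses of balls against counts of cubes.

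For part (2), I would first read off $|A|=\Theta_d(C^{\pm 1})\,2^{Ns}$ from the discrete AR condition of $A$ at scale $k=0$. Given $x\in\supp\mu^A$, the dyadic $N$-cube containing $x$ meets $A$, so there is $y\in A$ with $|x-y|\le \sqrt d\,2^{-N}$. Since $\mu^A$ is a sum of normalized Lebesgue measures on dyadic $N$-cubes, the sandwich
\[
\frac{|A\cap B(x,r-C_d 2^{-N})|}{|A|}\le\mu^A(B(x,r))\le\frac{|A\cap B(x,r+C_d 2^{-N})|}{|A|}
\]
holds for a dimensional $C_d$. Shifting the centre from $x$ to $y$ at the cost of another $\sqrt d\,2^{-N}$ and applying the discrete AR condition of $A$ at $y$ at a scale $2^{-k}\asymp r$ yields $\mu^A(B(x,r))=\Theta_d(C^{\pm O(1)})\,r^s$ for $r\in[2^{-N},1]$.

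For part (1), fix $x=x_L(D_0)\in A^\mu$ and $y\in D_0\cap\supp\mu$, so $|x-y|\le\sqrt d\,2^{-N}$, and $k\in[N]$. For the upper bound on $|A^\mu\cap B(x,2^{-k})|$ I would run a packing argument: for each counted cube $D$ choose $y_D\in D\cap\supp\mu$, extract a maximal $2^{-N}$-separated subset $\{y_i\}\subset\{y_D\}$, which captures a $\Omega_d(1)$-fraction of the cubes (at most $O_d(1)$ cubes share any single point $y_i$ as nearest neighbour). The balls $B(y_i,2^{-N-1})$ are pairwise disjoint, each has $\mu$-mass at least $\Omega(C^{-1})\,2^{-Ns}$ by the AR lower bound, and they lie inside $B(y,O_d(2^{-k}))$ whose mass is at most $O_d(C)\,2^{-ks}$; comparing gives the upper bound $O_d(C^{O(1)})\,2^{(N-k)s}$. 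For the lower bound, the AR lower bound gives $\mu(B(x,2^{-k}))\ge\Omega(C^{-1})\,2^{-ks}$, and this mass is carried by cubes $D$ each of mass at most $O_d(C)\,2^{-Ns}$ (applying the AR upper bound to a ball $B(y_D,\sqrt d\,2^{-N})$ containing $D$). Dividing yields at least $\Omega_d(C^{-O(1)})\,2^{(N-k)s}$ cubes of positive mass meeting $B(x,2^{-k})$.

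The only real nuisance is the boundary discrepancy between ``$D$ intersects $B(x,2^{-k})$'' and ``$x_L(D)\in B(x,2^{-k})$'', which amounts to inflating or deflating the radius by $O_d(2^{-N})$. For $k\le N-C_d$ this is a constant-factor change and costs only an $O_d(1)$ in the count (absorbed into the dimensional constant hidden by $O(C)$); for $k$ within a constant of $N$ the ball contains only $O_d(1)$ cubes and both sides of the discrete AR inequality are $O_d(1)$, following immediately from $x\in A^\mu$. I expect no deeper obstacle beyond bookkeeping.
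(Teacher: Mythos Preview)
Your proposal is correct and follows essentially the same approach as the paper: both arguments compare $\mu$-masses of balls at scales $2^{-k}$ and $2^{-N}$ via the Ahlfors-regular bounds to count dyadic $N$-cubes of positive mass. The paper packages both bounds for part (1) into a single bounded-overlap covering by balls $B(y_j,3\cdot 2^{-N})$ centred at the (already grid-separated) points $y_j\in A^\mu$, whereas you separate upper and lower bounds and pass to points $y_D\in\supp\mu$ first; this makes your write-up slightly longer but also more explicit about the boundary bookkeeping, and the paper omits part (2) entirely as ``analogous''.
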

\begin{proof}
Suppose $\mu$ is $(s,C)$-Ahlfors regular at scale $2^{-N}$ and fix $k\in [N]$. If $y\in A^\mu$, then $\mu(B(y,3\cdot 2^{-N}))\in (\Omega(C)2^{-sN},O(C)2^{-sN})$ and likewise with $k$ in place of $N$. If $B(x,2^{-k})\cap A=\{y_1,\ldots,y_m\}$, then $\{ B(y_j,3\cdot 2^{-N})\}$ is a covering of $\supp(\mu)\cap B(x,2^{-k})$ with bounded overlapping, so the first claim follows. The proof of the second claim is analogous, so is omitted.
\end{proof}

We will see that $s$-Ahlfors regular measures are weakly $s$-regular. The following quantitative version of this will be crucial later.
\begin{lemma} \label{lem:Ahlfors-regular-is-rich}
Given $\e, q, N, C$ such that $\log C/q<\e$ and $q<\e N$, the following holds.

Let $\nu$ be $(s,C)$-Ahlfors regular at scale $2^{-N}$. Then if $\mu\in\cP([0,1)^d)$ is supported on $\supp(\nu)$ and $H_N(\mu)> s-\e$, then  $\mu$ is $s$-rich at resolution $(N,q,\sqrt{\e}/C')$, where $C'>0$ depends only on $d$.
\end{lemma}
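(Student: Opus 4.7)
The plan is to compare a lower bound on $\int H_q(\eta)\,d\la\mu\ra_{[0,N]}(\eta)$, coming from Lemma~\ref{lem:local-entropy-from-global-entropy} and the hypothesis $H_N(\mu)>s-\e$, with a pointwise upper bound on $H_q(\mu^{x,n})$ that follows from the Ahlfors regularity of $\nu$. By Lemma~\ref{lem:local-entropy-from-global-entropy} and $q<\e N$,
\[
\int H_q(\eta)\,d\la\mu\ra_{[0,N]}(\eta)\ \ge\ H_N(\mu)-O_d(q/N)\ \ge\ s-C_1\e
\]
for some $C_1=C_1(d)$.

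For the upper bound, fix $n\in[0,N-q]$ and $x$ with $\mu(D_n(x))>0$. Since $\mu$ is supported on $\supp(\nu)$, the rescaled measure $\mu^{x,n}$ is supported on $T_{D_n(x)}(\supp(\nu)\cap D_n(x))$. A standard packing-covering argument using $(s,C)$-Ahlfors regularity of $\nu$ at scale $2^{-N}$ shows that $\supp(\nu)\cap D_n(x)$ is covered by at most $O_d(C^2)\,2^{sq}$ cubes of $\cD_{n+q}$: the upper AR bound controls $\nu(D_n(x))\le O_d(C)2^{-sn}$, while the lower AR bound forces any $2^{-n-q}$-separated family in $\supp(\nu)\cap D_n(x)$ to have $\le O_d(C^2)2^{sq}$ elements. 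Consequently $\mu^{x,n}$ is supported on $O_d(C^2)\,2^{sq}$ cubes of $\cD_q$, giving
\[
H_q(\mu^{x,n})\ \le\ s+\frac{2\log C+O_d(1)}{q}\ \le\ s+C_2\e,
\]
where the last step uses $\log C/q<\e$ and absorbs the $O_d(1/q)$ term into $C_2=C_2(d)$ (the result is vacuous when $q$ is too small for this to hold). For the remaining scales $n\in[N-q,N-1]$, whose total $\la\mu\ra_{[0,N]}$-mass is $\le q/N<\e$, we use only the trivial bound $H_q\le d$.

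Now set $B=\{\eta:H_q(\eta)<s-\delta\}$ with $\delta=\sqrt{\e}/C'$, and write $a=\la\mu\ra_{[0,N]}(B)$. Splitting the integral over $B$ and $B^c$, and separating on $B^c$ the contribution of the scales $n>N-q$ (mass $\le\e$, integrand $\le d$) from $n\le N-q$ (integrand $\le s+C_2\e$), we get
\[
s-C_1\e\ \le\ (s-\delta)a+(s+C_2\e)(1-a)+d\e,
\]
which rearranges to $\delta\,a\le C_0\e$ for some $C_0=C_0(d)$. Thus $a\le C_0\e/\delta=C_0C'\sqrt{\e}$, and to obtain $a<\delta=\sqrt{\e}/C'$ it suffices to take $C'=C'(d)$ small enough that $C_0(C')^2<1$. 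This proves $\mu$ is $s$-rich at resolution $(N,q,\sqrt{\e}/C')$.

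The only subtle point is the boundary scales $n\in[N-q,N-1]$ where Ahlfors regularity gives no useful information; their contribution is absorbed because their total weight $q/N$ is already bounded by $\e$, of the same order as the main error term coming from Lemma~\ref{lem:local-entropy-from-global-entropy}.
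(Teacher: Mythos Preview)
Your proof is correct and follows the same strategy as the paper: cap $H_q(\mu^{x,n})$ at $s+O(\e)$ via the Ahlfors regularity of $\nu$, combine with the global lower bound on $\int H_q\,d\la\mu\ra_{[0,N]}$ from Lemma~\ref{lem:local-entropy-from-global-entropy}, and conclude by (the computation underlying) Markov's inequality. The only cosmetic difference is that the paper first restricts to scales $n\in[0,N-q]$ via the decomposition $\la\mu\ra_{[0,N]}=\tfrac{N-q}{N}\la\mu\ra_{[0,N-q]}+\tfrac{q}{N}\la\mu\ra_{[N-q,N]}$ before running the argument, whereas you keep the full interval and absorb the boundary scales' $O(\e)$ contribution directly into the final inequality.
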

\begin{proof}
Any constants implicit in the $O$ notation are allowed to depend on $d$ only. Since $q<\e N$ and
\begin{equation} \label{eq:decomp-scales}
\la \mu\ra_{[0,N]} = \frac{N-q}{N}\la \mu\ra_{[0,N-q]} + \frac{q}{N} \la\mu\ra_{[N-q,N]},
\end{equation}
it is enough to show that $\mu$ is $s$-rich at resolution $(N-q,q,\sqrt{\e}/C')$.

Write $A=\supp(\nu)$. We begin by noting that for any $D\in\cD_n^{(d)}$ with $n\in [N-q]$, the set $A$ meets at most $O(C) 2^{sq}$ cubes $D'\subset D, D'\in\cD_{n+q}$. Indeed, let $D_1,\ldots, D_m$ be the sub-cubes of $D$ in $\cD_{n+q}$ that hit $A$, and pick $x_i\in D_i\cap A$. The family $B(x_i,2^{-(n+q)})$ has overlapping bounded by $O(1)$ and each member is contained in $D(2^{-n})$, the $(2^{-n})$-neighborhood of $D$. On the other hand, $D(2^{-n})\subset B(x_1, (\sqrt{d}+1) 2^{-n})$. Hence
\[
O(C) 2^{-sn}  \ge \nu(D(2^{-n}))  \ge \sum_{i=1}^m \nu(B(x_i,2^{-(n+q)})) \ge (O(C))^{-1}  2^{-sn} 2^{-sq} m,
\]
giving the claim. In particular, we see that $H_q(\mu^{x,n}) \le s + O(\log C/q) \le s+O(\e)$ for any $x\in \supp(\mu)$ and any $n\in [N-q]$.

We know from Lemma \ref{lem:local-entropy-from-global-entropy}, the assumption and \eqref{eq:decomp-scales} that
\[
\int H_q(\eta)\, d\la \mu\ra_{[0,N-q]}(\eta) \ge \int H_q(\eta)\, d\la \mu\ra_{[0,N]}(\eta) - \frac{q}{N-q}  \ge s - O(\e)
\]
which, since  $H_q(\eta) \le s+O(\e)$ for $\la \mu\ra_{[0,N-q]}$ a.e. $\eta$, we can rewrite as
\[
\int s-H_q(\eta)+C'\e\, d\la \mu\ra_{[0,N-q]}(\eta) \le O(\e),
\]
where the constant $C'$ was chosen so that the integrand is positive. The lemma now follows from Markov's inequality.
\end{proof}

As an immediate consequence, we deduce that a class of measures, including $s$-Ahlfors regular measures, are indeed weakly $s$-regular.
\begin{corollary}
If $\mu$ is supported on an $s$-Ahlfors regular set and $\ledim\mu=s$, then $\mu$ is weakly $s$-regular. In particular, this is the case for $\nu_A$ when $\nu$ is $s$-Ahlfors regular and $\nu(A)>0$.
\end{corollary}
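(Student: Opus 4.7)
The plan is to reduce the corollary to a direct application of Lemma \ref{lem:Ahlfors-regular-is-rich}, which already does the main work.

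First I would observe that if $\mu$ is supported on an $s$-Ahlfors regular set $A$, then the number of cubes in $\cD_k^{(d)}$ intersecting $A$ is $O(2^{sk})$ (by comparing with an Ahlfors regular measure on $A$ and a ball-covering argument). Consequently, $H_k(\mu)\le s+O(1/k)$. Combined with the hypothesis $\ledim\mu=s$, i.e.\ $\liminf_k H_k(\mu)=s$, this gives that for every $\e>0$ we have $H_k(\mu)\ge s-\e$ for all sufficiently large $k$.

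Now, given $\delta>0$, I would choose parameters as follows. Let $C$ be the Ahlfors regularity constant of $\nu:=\mathcal{H}^s|_A$, let $C'$ be the constant from Lemma \ref{lem:Ahlfors-regular-is-rich}, and set $\e=(C'\delta)^2$. Pick $q$ large enough that $\log C/q<\e$. For all $N$ sufficiently large (depending on $q$ and $\delta$), we then have both $q<\e N$ and $H_N(\mu)>s-\e$. Since $\mu$ is supported on $\supp(\nu)$ and $\nu$ is $(s,C)$-Ahlfors regular at every scale, Lemma \ref{lem:Ahlfors-regular-is-rich} applies and yields that $\mu$ is $s$-rich at resolution $(N,q,\sqrt{\e}/C')=(N,q,\delta)$. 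This is precisely weak $s$-regularity.

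For the ``in particular'' assertion I see two routes. The cleanest is to first apply the main claim to $\nu$ itself (noting that an $(s,C)$-Ahlfors regular measure has local dimension $s$ everywhere on its support, hence is exact-dimensional of dimension $s$ and in particular has $\ledim\nu=s$), and then invoke Lemma \ref{weak-reg-subsets-of-positive-meas} to pass from $\nu$ to $\nu_A$. Alternatively one can verify directly that $\nu_A$ is supported on the $s$-Ahlfors regular set $\supp\nu$ and, using Lebesgue density together with $\nu_A\le\nu/\nu(A)$, that $\nu_A$ is exact-dimensional of dimension $s$, so $\ledim\nu_A=s$ and the main claim applies. There is no real obstacle here; the only thing to be careful about is making sure the threshold in $N$ for $H_N(\mu)>s-\e$ depends only on $\e$ (hence on $q$ and $\delta$), as required by the definition of weak $s$-regularity.
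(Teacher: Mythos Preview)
Your proposal is correct and follows essentially the same route as the paper: choose $\e$ of order $\delta^2$, pick $q$ so that $\log C/q<\e$, and for large $N$ invoke Lemma~\ref{lem:Ahlfors-regular-is-rich}. One small remark: your opening paragraph establishing $H_k(\mu)\le s+O(1/k)$ is unnecessary, since $\ledim\mu=\liminf_k H_k(\mu)=s$ already gives $H_k(\mu)>s-\e$ for all large $k$ without any upper bound. For the ``in particular'' clause, the paper takes your second route (directly checking that $\nu_A$ is exact-dimensional of dimension $s$ via the density point theorem), while your first route through Lemma~\ref{weak-reg-subsets-of-positive-meas} is an equally valid shortcut.
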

\begin{proof}
Fix $\delta>0$ and take $q$ large enough that $\log(C)/q < \delta^2$. Since $\ledim\mu=s$, we know that $H_N(\mu)>s-\delta^2$ for large enough $N$. If $N$ is also large enough that $N > \delta^{-2} q$, then the previous lemma says that $\mu$ is rich at resolution $(N,q,O(\delta))$.

For the latter claim, note that $\nu_A$ has exact dimension $s$ (as a consequence of the density point theorem), so that $\ledim\nu_A=s$.
\end{proof}

\section{Proof of Theorem \ref{thm:many-large-pinned-dist-sets}, and consequences}
\label{sec:pinned-dist-sets}

\subsection{Discrete conical density lemmas}
\label{subsec:conical-density}

In the proof of Theorem \ref{thm:many-large-pinned-dist-sets} we will require some discrete conical density results. These are similar to those in \cite[Section 3]{SSS13}.

We say that a set $A\subset [0,1]^2$ is \textbf{$k$-discrete} if $|A\cap D|\le 1$ for all $D\in\cD_k^{(2)}$. Also, let $X(a,\beta,v)$ be the two-sided cone with center $a\in\R^2$, opening $\beta\in (0,\pi/2)$ and direction $v\in S^1$. The following is a discrete analog of \cite[Lemma 15.13]{Mattila95}.
\begin{lemma} \label{lem:discrete-dense-radial-projection}
Given $\beta>0$, there is a constant $C=C(\beta)>0$ such that the following holds. If $A$ is $k$-discrete and for each $a\in A$ there is a direction $v$ such that
\[
X(a,\beta,v) \cap A\setminus \{a\} = \emptyset,
\]
then $|A|\le C 2^k$.
\end{lemma}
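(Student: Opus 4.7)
The plan is to pigeonhole on the direction of the empty cone so that many points of $A$ share essentially the same free cone, convert cone-emptiness into a quantitative separation estimate for an orthogonal projection, and finally use $k$-discreteness to count.

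Concretely, I would first cover $S^1$ by $N=O(1/\beta)$ arcs of diameter $\beta/2$ and pigeonhole on the arc containing the direction $v(a)$ provided by the hypothesis. This produces a subset $A'\subset A$ with $|A'|\ge |A|/N$ and a single direction $v_0\in S^1$ such that, for every $a\in A'$, the triangle inequality gives $X(a,\beta/2,v_0)\subset X(a,\beta,v(a))$, so $X(a,\beta/2,v_0)\cap A\setminus\{a\}=\emptyset$. The key geometric fact is the elementary trigonometric identity: a point $x$ lies outside the double cone $X(0,\beta/2,v_0)$ if and only if its projection $\pi_{v_0^\perp}(x)$ onto the line perpendicular to $v_0$ satisfies $|\pi_{v_0^\perp}(x)|\ge\sin(\beta/2)|x|$. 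Combined with the previous step, this yields the transversality estimate
\[
|\pi_{v_0^\perp}(a)-\pi_{v_0^\perp}(a')|\ge\sin(\beta/2)\,|a-a'|\qquad\text{for all distinct }a,a'\in A'.
\]

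The counting step then proceeds as follows. Cover the projection $\pi_{v_0^\perp}(A')$, which lies in an interval of length $O(1)$, by $O(2^k/\sin(\beta/2))$ sub-intervals of length $2^{-k-1}\sin(\beta/2)$. By the transversality estimate, any two points of $A'$ projecting into the same sub-interval lie in a common Euclidean ball of diameter $2^{-k}$; such a ball meets only $O(1)$ cubes of $\cD_k^{(2)}$, so $k$-discreteness bounds the fiber by $O(1)$ points. Multiplying gives $|A'|=O(2^k/\sin(\beta/2))$, and then $|A|\le N|A'|=O_\beta(2^k)$, with explicit constant $C(\beta)=O(1/(\beta\sin(\beta/2)))=O(1/\beta^2)$ as $\beta\to 0$.

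No step in this plan looks genuinely difficult. The only point to watch is that the pigeonhole step must preserve cone-emptiness with respect to all of $A$, not just $A'$; this is automatic because the reduced cone $X(a,\beta/2,v_0)$ is contained in the original $X(a,\beta,v(a))$, provided the pigeonhole resolution is no larger than the $\beta/2$ loss in aperture. The argument is a clean discrete analogue of the pigeonhole-plus-projection reasoning behind the classical radial density bound of \cite[Lemma 15.13]{Mattila95}.
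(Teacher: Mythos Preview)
Your proposal is correct and follows essentially the same route as the paper: pigeonhole on the cone direction to reduce to a common $v_0$ with aperture $\beta/2$, then use that points outside the cone have large perpendicular projection. The only cosmetic difference is that the paper first refines $A'$ to a $(2^{-k})$-separated subset (losing a further $O(1)$ factor) so that the projection $\Pi_{v_0^\perp}$ becomes outright injective with $\Omega_\beta(2^{-k})$-separated image, whereas you keep all of $A'$ and instead bound the fibers over short intervals using $k$-discreteness; these are interchangeable bookkeeping choices.
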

\begin{proof}
We begin with a simplification. Choose a finite set $\{ v_j\}$ with $O_\beta(1)$ elements such that for every $v\in S^1$ there exists $v_j$ with
\[
X(a,\beta/2,v_j)\subset X(a,\beta,v).
\]
Hence, if $A$ is as in the statement, for every $a\in A$ we can pick $j(a)$ such that
\[
X(a,\beta/2,v_{j(a)}) \cap A\setminus\{a\}= \emptyset.
\]
Let $A_j = \{ a\in A: j(a)=j\}$. Some $A_j$ has $\ge |A|/O_\beta(1)$-elements. Moreover, by passing to a further refinement with $|A|/O_\beta(1)$ elements, we can assume that the elements of $A_j$ are $(2^{-k})$-separated. This shows that it is enough to prove the following statement: if $v_0$ is a fixed direction, and $A\subset [0,1]^2$ is a $(2^{-k})$-separated set such that
\[
X(a,\beta/2,v_0) \cap A\setminus\{a\}= \emptyset \quad\text{for all }a\in A,
\]
then $|A|\le O_\beta(2^k)$.

Let $\Pi(x)=\Pi_{v_0^\perp}(x)= x \cdot v_0^{\perp}$, where $v_0^\perp$ is a unit vector perpendicular to $v_0$. It follows from our assumptions on $A$ that $|\Pi(a)-\Pi(a')| \sin(\beta/2)\ge 2^{-k}$ for any distinct $a,a'\in A$. In particular, $\Pi|_A$ is injective and its range has $O_\beta(2^k)$ elements so $|A| \le O_\beta(2^k)$, as claimed.
\end{proof}

For sets which are dense in a discrete $s$-Ahlfors regular set, we obtain the following consequence.
\begin{lemma} \label{lem:dense-radial-proj-discr-AR}
Given $s\in (1,2), C>1, \kappa\in (0,(s-1)/(2s)), \beta\in (0,\pi/2)$, the following holds for all large enough $N$ (depending on $s,C,\kappa,\beta$ only):

Let $A$ be a discrete $(s,C)$-Ahlfors regular set at scale $2^{-N}$, and suppose $B\subset A$ satisfies $|B|> 2^{(1-\kappa) s N}$. Then there exists a subset $E\subset B$ with $|E|\le 2^{(1-\kappa)sN}$ such that for all $x\in B\setminus E$ and any $v\in S^1$ there exists $y\in B$ such that $y\in X(x,\beta,v)$ and $|x-y|\ge 2^{-2s(s-1)^{-1}\kappa N}$.
\end{lemma}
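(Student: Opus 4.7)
The plan is to let $\alpha = 2s\kappa/(s-1) \in (0,1)$ (the assumption $\kappa < (s-1)/(2s)$ is precisely what makes $\alpha < 1$, so $r := 2^{-\alpha N}$ is a scale intermediate between $1$ and $2^{-N}$ where discrete Ahlfors regularity is still available). Define $E$ to be the set of all $x \in B$ that fail the conclusion; that is, $x \in E$ iff there exists $v \in S^1$ such that every $y \in B \cap X(x, \beta, v)$ satisfies $|x-y| < r$. Taking $E$ as large as possible makes the rest of the proof amount to bounding $|E|$ from above by $2^{(1-\kappa)sN}$ for large $N$.

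The key step is a pigeonhole-type reduction to Lemma~\ref{lem:discrete-dense-radial-projection}. Pick $k \in \N$ with $2^{-k} \asymp r$ (precisely, so that $2\sqrt{2}\cdot 2^{-k} \ge r$) and select a subset $E' \subset E$ that is maximal subject to being $(2\sqrt{2}\cdot 2^{-k})$-separated; this guarantees $E'$ is $k$-discrete. Since $E \subset \bigcup_{x \in E'} B(x, O(2^{-k}))$ and $|A \cap B(x, O(2^{-k}))| \le O(C)\cdot 2^{s(N-k)}$ by the discrete Ahlfors regularity of $A$, we obtain
\[
|E'| \ge \Omega_C(1) \cdot |E| \cdot 2^{-s(1-\alpha)N}.
\]
On the other hand, for each $x \in E' \subset E$ there is a direction $v(x)$ witnessing the defining emptiness; any other $y \in E'$ lies in $B$ and satisfies $|y - x| \ge 2\sqrt{2}\cdot 2^{-k} \ge r$, so by the very definition of $E$ it cannot belong to $X(x, \beta, v(x))$. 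Hence $E'$ satisfies the hypothesis of Lemma~\ref{lem:discrete-dense-radial-projection}, which yields $|E'| \le C(\beta) \cdot 2^k = O_\beta(2^{\alpha N})$.

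Combining the two bounds,
\[
|E| \le O_{C, \beta}(1) \cdot 2^{s(1-\alpha)N} \cdot 2^{\alpha N} = O_{C,\beta}(1) \cdot 2^{(s - (s-1)\alpha)N} = O_{C,\beta}(1) \cdot 2^{s(1 - 2\kappa)N},
\]
where the last equality uses $\alpha = 2s\kappa/(s-1)$. Since $s(1-2\kappa) < s(1-\kappa)$, the implicit constant is absorbed into an extra factor of $2^{s\kappa N}$ for $N$ large, giving $|E| \le 2^{(1-\kappa)sN}$ as required.

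The main (minor) obstacle is bookkeeping at the boundary between two scales: $k$-discreteness in the sense of Lemma~\ref{lem:discrete-dense-radial-projection} is slightly weaker than $(2^{-k})$-separation, and one needs an extra $2\sqrt{2}$ factor both to guarantee $k$-discreteness of $E'$ from its separation and to guarantee that any two distinct points of $E'$ lie at distance at least $r$ (so that the cone condition transfers from $E$ to $E'$). Otherwise the argument is a clean balance between the Ahlfors upper bound on $|A \cap B(x,r)|$ and the linear-in-$2^k$ bound from the discrete radial projection lemma, with the factor $2$ in $\alpha = 2s\kappa/(s-1)$ providing the slack needed to absorb the constants $C$ and $C(\beta)$ for $N$ large.
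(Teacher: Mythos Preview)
Your proof is correct and follows essentially the same approach as the paper's: define $E$ as the set of ``not well surrounded'' points, pass to a maximal separated subset $E'$ at scale $\asymp 2^{-\alpha N}$ (the paper writes $\kappa'$ for your $\alpha$), lower-bound $|E'|$ via Ahlfors regularity, upper-bound it via Lemma~\ref{lem:discrete-dense-radial-projection}, and combine. The only cosmetic difference is that the paper phrases the endgame as a proof by contradiction whereas you give a direct bound on $|E|$; your extra bookkeeping with the $2\sqrt{2}$ factor is a clean way to handle what the paper leaves implicit.
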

\begin{proof}
Write $\kappa'=2s(s-1)^{-1}\kappa$ and note that $\kappa'\in (0,1)$. We say that a point $x\in B$ is \emph{well surrounded} if for every $v\in S^1$ there is $y\in B$ such that $y\in X(x,\beta,v)$ and $|x-y|\ge 2^{-\kappa' N}$.

Let $E\subset B$ be the set of all points in $B$ which are \emph{not} well surrounded, and suppose $|E|>2^{(1-\kappa)sN}$. Let $E_1$ be a maximal $(2^{-\kappa' N})$-separated subset of $E$. Since each ball of radius $2^{-\kappa' N}$ contains $O(C) 2^{(1-\kappa')sN}$ points of $A\supset E$, it follows that $|E_1|> \Omega(C) 2^{(\kappa'-\kappa)s N}$. Note that $(\kappa'-\kappa)s> \kappa$, and let $C'=C'(\beta)$ be the constant given by Lemma \ref{lem:discrete-dense-radial-projection}. Provided $N$ is large enough that $\Omega(C) 2^{(\kappa'-\kappa)s N}> C' 2^{\kappa N}$, it follows from Lemma \ref{lem:discrete-dense-radial-projection} and the definitions that $E_1$ contains a well surrounded point. This contradiction proves the lemma.
\end{proof}

\subsection{Pinned distance sets in discrete regular sets}

The core of the proof of Theorem \ref{thm:many-large-pinned-dist-sets} consists in showing the existence of \emph{one} large pinned distance set. We state and prove the corresponding statement separately:
\begin{prop} \label{prop:discrete-Falconer}
Given $s>1, C>1, t\in (0,1)$, there exist $\e=\e(s,C,t)>0$ and $N_0=N_0(s,C,t,\e)\in\N$ such that the following holds: if $N\ge N_0$, and $A\subset[0,1]^2$ is a subset of a discrete $(s,C)$-Ahlfors regular set at scale $2^{-N}$, such that $|A| \ge 2^{(s-\e)N}$, then there exists $x\in A$ such that
\[
\cN(\dist(x,A),2^{-N}) \ge 2^{tN}.
\]
\end{prop}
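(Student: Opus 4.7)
The strategy is a contradiction-and-compactness argument: assuming the conclusion fails, extract a limiting CP-distribution with lower dimension $>1$, and then combine the projection theorem for CPDs (Theorem~\ref{thm:projections-CPD}) with the discrete conical density (Lemma~\ref{lem:dense-radial-proj-discr-AR}) to exhibit a single good pin.

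Suppose for contradiction that for some fixed $s>1$, $C>1$, $t\in(0,1)$ no such $\e,N_0$ exist. Then there are sequences $\e_n\downarrow 0$, $N_n\to\infty$ and sets $A_n\subset[0,1]^2$ satisfying the hypotheses with $|A_n|\ge 2^{(s-\e_n)N_n}$, yet with $\cN(\dist(x,A_n),2^{-N_n})<2^{tN_n}$ for every $x\in A_n$. Set $\mu_n:=\mu^{A_n}$ from Lemma~\ref{lem:Ahlfors-set-to-measure}: it is $(s,O(C))$-Ahlfors regular at scale $2^{-N_n}$, and since $|A_n|$ is near-maximal, $H_{N_n}(\mu_n)\ge s-O(\e_n)$. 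Fixing $q\in\N$ large, Lemma~\ref{lem:Ahlfors-regular-is-rich} says each $\mu_n$ is $s$-rich at resolution $(N_n,q,\delta_n)$ with $\delta_n\to 0$. Passing to a subsequence, $\la\mu_n\ra_{[0,N_n]}\to P$ in $\widehat{\rho}$, and Lemma~\ref{lem:limits-are-CPDs} identifies $P$ as a CPD. The richness passes to $P$ by continuity of $H_q$ on $(\cP([0,1)^d),\widetilde{\rho})$, and combined with the matching Ahlfors upper bound in Lemma~\ref{lem:Ahlfors-regular-is-rich} this forces $\dim_*P=s>1$. Theorem~\ref{thm:projections-CPD} then provides a lower-semicontinuous $\cE$ with $\cE(v)\ge 1$ Lebesgue-a.e.\ on $S^1$; for $q$ large, $V_\eta:=\{v:\cE_q(v)>1-\eta\}$ is Lebesgue-generic in $S^1$ for any prescribed $\eta>0$.

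Fix $\kappa>0$ small, set $\kappa'=2s(s-1)^{-1}\kappa$, and apply Lemma~\ref{lem:dense-radial-proj-discr-AR} with $\beta=2^{-q-c_0}$: we get $E_n\subset A_n$ with $|E_n|\le 2^{(1-\kappa)sN_n}$ such that every $x\in A_n\setminus E_n$ is well-surrounded. For such $x$ and any $v\in S^1$, pick $y=y(x,v)\in A_n$ with $y\in X(x,\beta,v)$ and $|x-y|\ge 2^{-\kappa' N_n}$. Let $k=\lceil\kappa' N_n\rceil+q$ and $D(x,v)=D_k(y)\in\cD_k^{(2)}$. The combination $|z-y|\le\sqrt{2}\cdot 2^{-k}$ and $|x-y|\ge 2^{-\kappa' N_n}$ bounds the angular deviation of $\sigma(x,z)$ from $\sigma(x,y)$ by $O(2^{-q})$, so $|\sigma(x,z)-v|\le 2^{-q}$ for all $z\in D(x,v)$. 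Corollary~\ref{cor:lower-bound-entropy-pinned-dist} then applies to $(\mu_n)_{D(x,v)}$:
\[
H_{N_n}(\phi_x (\mu_n)_{D(x,v)})\ge \int H_q(\Pi_v\eta)\,d\la (\mu_n)_{D(x,v)}\ra_{[0,N_n]}(\eta)\,-\,O(q/N_n+1/q).
\]
Since $\supp\phi_x(\mu_n)_{D(x,v)}$ lies in an $O(2^{-N_n})$-neighborhood of $\dist(x,A_n)/2$, the entropy-to-covering estimate gives $\cN(\dist(x,A_n),2^{-N_n+O(1)})\ge 2^{N_n H_{N_n}(\phi_x(\mu_n)_{D(x,v)})}$, so producing a single pair $(x,v)$ for which the right-hand integral exceeds $t+O(q/N_n+1/q)$ will yield the contradiction.

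To bound the integral, use the identity $\la\mu_n\ra_{[k,N_n]}=\sum_{D\in\cD_k^{(2)}}\mu_n(D)\,\la(\mu_n)_D\ra_{[k,N_n]}$, valid because every $Q\in\cD_\ell$ with $\ell\ge k$ sits in a unique $D\in\cD_k$ and then $\mu_n^Q=(\mu_n)_D^Q$. Fix $v\in V_\eta$: weak convergence yields $\int H_q(\Pi_v\cdot)\,d\la\mu_n\ra_{[0,N_n]}\ge 1-2\eta$ for large $n$, and since scales $[0,k)$ contribute at most $k/N_n=O(\kappa'+q/N_n)$, the same lower bound (up to $O(\kappa')$) holds for $\la\mu_n\ra_{[k,N_n]}$. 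Markov's inequality then produces $\cG_n(v)\subset\cD_k^{(2)}$ with $\sum_{D\in\cG_n(v)}\mu_n(D)\ge 1-\sqrt{\eta}$ and $\int H_q(\Pi_v\cdot)\,d\la(\mu_n)_D\ra_{[0,N_n]}\ge 1-O(\sqrt{\eta}+\kappa')$ for every $D\in\cG_n(v)$. A Fubini/averaging argument over the pairs $(x,v)\in (A_n\setminus E_n)\times V_\eta$ (exploiting that $|A_n\setminus E_n|\ge|A_n|/2$ and $V_\eta$ is Lebesgue-generic) locates a pair with $D(x,v)\in\cG_n(v)$. Choosing $\eta,\kappa$ small and $q,N_n$ large so that the cumulative error $O(\sqrt{\eta}+\kappa'+1/q+q/N_n)$ is below $1-t$ closes the contradiction.

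The main obstacle is this final matching step: arguing that the conical-density cube $D(x,v)$, whose identity is dictated by the ambient geometry of $A_n$, lies in the ``generic'' family $\cG_n(v)$ for at least one pair $(x,v)$. One expects this to be handled by relating the Ahlfors-regular mass of narrow cones emanating from a well-surrounded $x$ to the $\mu_n$-mass of the $\cD_k$ cubes they hit, together with the fact that the ``bad'' cubes collectively carry only $O(\sqrt{\eta})$ of the total $\mu_n$-mass while $v$ ranges over a set of nearly full Lebesgue measure.
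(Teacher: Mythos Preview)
Your setup is correct and mirrors the paper's: the contradiction argument, the construction of $\mu_n$, the limiting CPD $P$, the verification $\dim_*P\ge s$ via Lemma~\ref{lem:Ahlfors-regular-is-rich} (minor slip: $\mu^{A_n}$ is not itself $(s,O(C))$-Ahlfors regular, only supported on the support of such a measure, namely $\mu^{B_n}$; fortunately this is exactly what Lemma~\ref{lem:Ahlfors-regular-is-rich} requires), and the appeal to Theorem~\ref{thm:projections-CPD}. The gap is precisely the one you flag yourself: the ``matching step'' linking the conical-density cube $D(x,v)$ to the generic family $\cG_n(v)$ is not carried out, and the heuristic you sketch (relating cone mass to cube mass) does not obviously work. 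For a fixed $v$, the map $x\mapsto D(x,v)$ may land in a tiny collection of $\cD_k$-cubes; nothing prevents these from all lying outside $\cG_n(v)$, since the conical density lemma guarantees only \emph{one} point $y$ in each cone, not that the cone carries substantial $\mu_n$-mass. Averaging over $v\in V_\eta$ does not help for the same reason.

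The paper dissolves this obstacle by reversing the roles of $x$ and $y$. It fixes a \emph{single} direction $v$ with $\cE(v)=1$ (no need for a generic set $V_\eta$). For each well-surrounded $x\in A_j\setminus E_j$ it finds $y\in A_j\cap X(x,2^{-q-1},v)$ with $|x-y|\ge 2^{-K\delta N_j}$; the key observation is that if one takes the dyadic square $D_{M_j}(x)$ around $x$ at the intermediate scale $M_j\approx (K+1)\delta N_j$, then the \emph{same} $y$ satisfies $y\in X(x',2^{-q},v)$ for every $x'\in D_{M_j}(x)$, simply because the square diameter is negligible compared to $|x-y|$. Thus one partitions $A_j\setminus E_j$ into squares $D_{j,1},\dots,D_{j,L_j}\in\cD_{M_j}$, each equipped with a pin $y_{j,k}$. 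These squares automatically carry $\mu_j$-mass $>1-\delta$, so the decomposition
\[
\la\mu_j\ra_{[M_j,N_j]}=\sum_{k}\mu_j(D_{j,k})\,\la(\mu_j)_{D_{j,k}}\ra_{[M_j,N_j]}+Q,\qquad \|Q\|\le\delta,
\]
together with the global bound $\int H_q(\Pi_v\eta)\,d\la\mu_j\ra_{[M_j,N_j]}>1-O_s(\delta)$, immediately yields by averaging a square $D_{j,k}$ with $\int H_q(\Pi_v\eta)\,d\la(\mu_j)_{D_{j,k}}\ra_{[0,N_j]}>1-O_s(\delta)$. Corollary~\ref{cor:lower-bound-entropy-pinned-dist} then applies with measure $(\mu_j)_{D_{j,k}}$ and pin $y_{j,k}$, giving the contradiction. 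In short: put the measure on a square around the well-surrounded point and use the cone point as the pin, not the other way around; then the covering is automatic and no matching is needed.
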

Before embarking on the proof of this proposition, we show how to deduce Theorem \ref{thm:many-large-pinned-dist-sets} from it.

\begin{proof}[Proof of Theorem \ref{thm:many-large-pinned-dist-sets} (assuming Proposition \ref{prop:discrete-Falconer})]
Let $\e$ and $N_0$ be as given by Proposition \ref{prop:discrete-Falconer}, and take $N\ge N_0$. Let
\[
B = \{ x\in A: \mathcal{N}(\dist(x,A),2^{-N})<2^{tN}\}.
\]
In particular, if $x\in B$, then  $\mathcal{N}(\dist(x,B),2^{-N})<2^{tN}$. By Proposition \ref{prop:discrete-Falconer} applied to $B$, $|B|\le 2^{(s-\e)N}$, as claimed.
\end{proof}

The rest of this section is devoted to the proof of Proposition \ref{prop:discrete-Falconer}. Suppose the claim is false. Then  we can find sequences $N_j\to\infty$, $\e_j\to 0$, $A_j, B_j\subset [0,1]^2$ such that $B_j$ is discrete $(s,C)$-Ahlfors regular at scale $2^{-N_j}$, $A_j\subset B_j$, $|A_j|\ge 2^{(s-\e_j)N_j}$, and $\cN(\dist(x,A_j),2^{-N_j}) < 2^{tN_j}$ for all $x\in A_j$.
Let
\[
\mu_j = \frac{1}{|A_j|} \sum_{D\in\mathcal{D}_{N_j}} |A_j\cap D|\mathcal{L}_D.
\]

By passing to a subsequence if needed, we may assume that $\la\mu_j\ra_{[0,N_j]}$ converges to some $P\in\cP(\cP([0,1)^2))$ in the $\widehat{\rho}$ topology. By Lemma \ref{lem:limits-are-CPDs}, $P$ is a CPD. We underline that $P$ needs not be ergodic under $M$; if it was, the next lemma would hold automatically. It is only in this lemma that the hypothesis of Ahlfors-regularity gets used.

\begin{lemma} \label{lem:lower-dim-regular-CPD}
$\dim_*P\ge s$
\end{lemma}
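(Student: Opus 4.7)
The plan is to bracket $\dim P_\mu$ between $s$ and $s$ for $P$-almost every ergodic component $P_\mu$, by combining a lower bound on $\int H_q\,dP$ coming from Lemma \ref{lem:Ahlfors-regular-is-rich} with a matching pointwise upper bound on $H_q$ coming directly from the Ahlfors-regularity of the ambient sets $B_j$. First observe that $\mu_j$ places mass $1/|A_j|$ on each of the $|A_j|\ge 2^{(s-\e_j)N_j}$ dyadic $2^{-N_j}$-cubes in its support, so $H_{N_j}(\mu_j)\ge s-\e_j$; moreover, since $A_j\subset B_j$, the measure $\mu_j$ is supported on $\supp(\nu_j)$, where $\nu_j=\mu^{B_j}$ is $(s,C')$-Ahlfors regular at scale $2^{-N_j}$ with $C'=O(C)$ by Lemma \ref{lem:Ahlfors-set-to-measure}.

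For the lower bound, given $\delta>0$, an appropriate choice of $q=q(\delta,C')$ followed by $j$ large enough lets us apply Lemma \ref{lem:Ahlfors-regular-is-rich} to $\mu_j$ (with ambient $\nu_j$), yielding that $\mu_j$ is $s$-rich at resolution $(N_j,q,\delta)$, i.e.
\[
\la \mu_j\ra_{[0,N_j]}\{\eta: H_q(\eta) < s - \delta\} < \delta.
\]
Since $H_q$ is continuous and bounded on $(\cP([0,1)^2),\wt{\rho})$, weak convergence $\la \mu_j\ra_{[0,N_j]}\to P$ gives
\[
\int H_q\,dP = \lim_j \int H_q\,d\la \mu_j\ra_{[0,N_j]} \ge (s-\delta)(1-\delta).
\]
As $\delta>0$ was arbitrary, $\int H_q\,dP\ge s$; this is consistent because, by the ergodic decomposition of $P$, $\int H_q\,dP=\int \dim P_\mu\,dP(\mu)$ is in fact independent of $q$.

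For the matching upper bound, a counting argument identical to the one in the first paragraph of the proof of Lemma \ref{lem:Ahlfors-regular-is-rich}, using only the Ahlfors regularity of $\nu_j$, shows that $H_q(\mu_j^{x,n})\le s+c\log(C')/q$ for every $x\in\supp(\mu_j)$ and every $n\in[N_j-q]$, with an absolute constant $c$. Since the remaining scales $n\in[N_j-q,N_j)$ contribute only mass $q/N_j\to 0$ in $\la \mu_j\ra_{[0,N_j]}$, and the set $\{\eta: H_q(\eta)>s+c\log(C')/q\}$ is open by continuity of $H_q$, the Portmanteau theorem forces its $P$-mass to be zero; hence $H_q(\eta)\le s+c\log(C')/q$ for $P$-a.e. $\eta$. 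Applying the ergodic decomposition $P=\int P_\mu\,dP(\mu)$ --- each $P_\mu$ being itself a CPD with $\dim P_\mu=\int H_q\,dP_\mu$ --- this transfers to $\dim P_\mu\le s+c\log(C')/q$, and letting $q\to\infty$ yields $\dim P_\mu\le s$ for $P$-a.e. $\mu$. Combined with $\int \dim P_\mu\,dP(\mu)\ge s$, this forces $\dim P_\mu=s$ for $P$-a.e. $\mu$, whence $\dim_*P\ge s$. The main subtlety is coordinating the parameters: Lemma \ref{lem:Ahlfors-regular-is-rich} demands $q$ large depending on $\delta$ and $C'$ and then $N_j$ large depending on $q$, and the independence of $\int H_q\,dP$ from $q$ is what lets these choices combine consistently across $\delta$; crucially, no ergodicity of $P$ is needed.
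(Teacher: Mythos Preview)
Your proof is correct, modulo a cosmetic slip: $\mu_j$ does not place mass exactly $1/|A_j|$ on each $2^{-N_j}$-cube, since a cube may contain up to $C$ points of $A_j$; but each cube has mass at most $C/|A_j|$, which still gives $H_{N_j}(\mu_j)\ge s-\e_j-\log C/N_j$, and the rest of the argument is unaffected.

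Your route is genuinely different from the paper's. Both start from Lemma~\ref{lem:Ahlfors-regular-is-rich} and pass to the weak limit, but the paper pushes the \emph{richness inequality itself} through the limit: it observes that $\{\eta:H_q(\eta)\ge s-\tilde\delta\}$ is $\wt\rho$-closed, so its $P$-mass dominates the limsup of the prelimit masses, giving $P\{\eta:H_q(\eta)\ge s-\tilde\delta\}>1-\tilde\delta$ for suitable $q$, and then applies Borel--Cantelli along a summable sequence $\tilde\delta_k\to 0$ to get $\uedim\nu\ge s$ for $P$-a.e.\ $\nu$. No ergodic decomposition, no upper bound on $H_q$, and no use of the $q$-independence of $\int H_q\,dP$ are needed. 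Your approach trades the Borel--Cantelli step for a sandwich: a lower bound on $\int H_q\,dP$ (via the $q$-independence trick, which is a nice observation) together with a pointwise upper bound $H_q\le s+O(\log C'/q)$ $P$-a.e.\ obtained from Portmanteau, forcing $\dim P_\mu=s$ for $P$-a.e.\ $\mu$. The payoff of your version is that it actually pins down $\dim_*P=s$ exactly, not just $\ge s$; the cost is that it invokes more structure (the ergodic decomposition and the formula $\dim P_\mu=\int H_q\,dP_\mu$) where the paper gets by with a one-line compactness-plus-Borel--Cantelli argument.
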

\begin{proof}
Since $P$-a.e. measure is exact-dimensional, it is enough to show that $\uedim\nu\ge s$ for $P$-a.e. $\nu$. In turn, by Borel-Cantelli this will follow if we can show that for every $\wt{\delta}>0$, if $q$ is sufficiently large (depending on $\wt{\delta}$), then
\begin{equation} \label{eq:supported-on-large-measures}
P\{\eta: H_q(\eta) \ge s-\wt{\delta}\} > 1-\wt{\delta}.
\end{equation}

Since $\e_j\to 0$, we know that $|A_j| \ge 2^{(s-\delta/2) N_j}$ for large enough $j$. Since $B_j$, and hence $A_j$, hits at most $C$ points in each dyadic square of side length $2^{-N_j}$, a calculation shows that, if $j$ is large enough, then
\[
H_{N_j}(\mu_j) \ge s-\delta.
\]
Lemma \ref{lem:Ahlfors-regular-is-rich} (together with Lemma \ref{lem:Ahlfors-set-to-measure}(2) applied to $B_j$) can then be invoked to conclude that, given $q$ is taken large enough in terms of $\delta$, and then $j$ is taken large enough in terms of $q$,  the measure $\mu_j$ is $s$-rich at resolution $(N_j,q,\sqrt{\delta}/C')$ for some universal $C'>0$. Since $H_q$ is continuous on $(\mathcal{P}([0,1)^d),\widetilde{\rho})$, the set $\{\eta: H_q(\eta) \ge s-\sqrt{\delta}/C'\}$ is compact, so we can pass to the limit to obtain our claim \eqref{eq:supported-on-large-measures}.
\end{proof}

At this point we fix a small $\delta>0$. In the end, a contradiction will be obtained provided $\delta$ was taken sufficiently small.

Let $\cE_q, \cE$ be the functions given by Theorem \ref{thm:projections-CPD}. We fix $v\in S^1$ such that $\cE(v)= 1$ (this is possible because $\dim_*P>1$).  Pick $q$ large enough that $1/q\le \delta$ and (recalling the definition of $\cE_q$)
\begin{equation} \label{eq:expected-projected-entropy-large}
\int \min(H_q(\Pi_v\eta),1)\,dP(\eta) > 1-\delta.
\end{equation}

We take $j$ large enough that $|A_j| \ge 2^{(1-\delta/2)s N_j}$. We know from Lemma \ref{lem:dense-radial-proj-discr-AR} that, again assuming $j$ is large enough, there is a set $E_j$ with $|E_j| \le 2^{(1-\delta)s N_j}$ such that if $x\in A_j\setminus E_j$, then there is $y\in A_j$ with $y\in X(x,2^{-q-1},v)$ and $|x-y|\ge 2^{-K\delta N_j}$, with $K>0$ depending only on $s$.

Write $M_j = \lfloor (K+1)\delta N_j\rfloor$ and note that if $y\in X(x,2^{-q-1},v)$ and $|x-y|\ge 2^{-K\delta N_j}$, then $y\in X(x',2^{-q},v)$ for all $x'\in D_{M_j}(x)$, again provided $j$ is large enough (the point is that the diameter of $D_{M_j}(x)$ is very small compared to $|x-y|$). If $D_{j,1},\ldots, D_{j,L_j}\in \cD_{M_j}^{(2)}$ is an enumeration of the the squares containing some point of $A_j\setminus E_j$, the previous observations show that, if $j$ is sufficiently large, then:
\begin{enumerate}
\item For each $k\in\{1,\ldots, L_j\}$, there is $y_{j,k}\in A_j$ such that $y_{j,k}\in X(x,2^{-q},v)$ for all $x\in D_{j,k}$.
\item
\begin{equation} \label{eq:good-projections-bad-set}
\mu_j\left(\bigcup_{k=1}^{L_j} D_{j,k}\right) > 1 - O_C(2^{-\delta N_j}) > 1-\delta.
\end{equation}
\end{enumerate}

Since $P$-a.e. measure is exact dimensional and has dimension $>1$, $P$-a.e. measure gives no mass to lines, hence the function $\eta\mapsto H_q(\Pi_v\eta)$ is continuous $P$-almost everywhere. Consequently, if $j$ is large enough we deduce from \eqref{eq:expected-projected-entropy-large} that
\[
\int \min(H_q(\Pi_v\eta),1) d\la \mu_j \ra_{[0,N_j]}(\eta) > 1-\delta.
\]
Since $M_j/N_j\le (K+1)\delta$, it follows that
\begin{equation} \label{eq:good-projections-global}
\int \min(H_q(\Pi_v\eta),1) d\la \mu_j \ra_{[M_j,N_j]}(\eta) > 1-(K+2)\delta.
\end{equation}
On the other hand, note that for any $\eta\in\cP([0,1)^2)$, and any $1\le M\le n$, there is a decomposition
\[
\la\eta\ra_n = \sum_{D\in\mathcal{D}_M} \eta(D) \la\eta_D\ra_n.
\]
Hence, if we denote $\nu_{j,k}=(\mu_j)_{D_{j,k}}$, adding up over $n=M_j,M_{j+1},\ldots, N_j$ yields
\begin{equation} \label{eq:good-projections-split-into-squares}
\la \mu_j \ra_{[M_j,N_j]} = \sum_{k=1}^{L_j}  \mu(D_{j,k}) \la \nu_{j,k} \ra_{[M_j,N_j]} + Q,
\end{equation}
where $Q$ has total mass at most $\delta$ by \eqref{eq:good-projections-bad-set}.

It follows from \eqref{eq:good-projections-global} and \eqref{eq:good-projections-split-into-squares} that
for large enough $j$ there exists a square $D_{j,k}$ with
\[
\int \min(H_q(\Pi_v\eta),1) d\la \nu_{j,k} \ra_{[M_j,N_j]}(\eta) > 1-(K+3)\delta.
\]
From now on we fix such a good square $D_{j,k}$ for each $j$, denote it simply by $D_j$ and forget about the other squares. We also denote $\nu_j=\nu_{j,k}$ and $y_j=y_{j,k}$. Recall that this is the point in $A_j$, whose existence we established earlier, such that $y_j \in X(x,2^{-q},v)$ for all $x\in D_j$.

Using again that  $M_j/N_j\le (K+1)\delta$, we get
\[
\int H_q(\Pi_v\eta) d\la \nu_{j} \ra_{[0,N_j]}(\eta) > 1-(2K+4)\delta.
\]
We have arranged things so that the hypotheses of Corollary \ref{cor:lower-bound-entropy-pinned-dist} are met. Since $1/q<\delta$, we conclude that, provided $j$ is large enough that $q/N_j<\delta$,
\[
H_{N_j}(\phi_{y_j}\nu_j) \ge 1 -O_s(\delta),
\]
where $\phi_y(x)=\frac12|x-y|$. In particular, since $\nu_j$ is supported on a $(2^{-N_j})$-neighborhood of $A_j$, this shows that
\[
\cN(\dist(y_j,A_j),2^{-N_j}) \ge 2^{(1-O_s(\delta))N_j}
\]
provided $j$ is large enough (depending on $\delta$). This contradicts with
\[
\cN(\dist(y_j,A_j),2^{-N_j}) < 2^{t N_j} \text{ for all $j$}
\]
if $\delta$ is small enough, yielding the result.

\subsection{Proof of Corollaries \ref{cor:pinned-dist-set-large-dev} and \ref{cor:dist-set-box-dim}}

It is now easy to deduce Corollaries \ref{cor:pinned-dist-set-large-dev} and \ref{cor:dist-set-box-dim}.

\begin{proof}[Proof of Corollary \ref{cor:pinned-dist-set-large-dev}]

Let $A$ be as in the statement. Write $A_N$ for the collection of centers of squares in $\mathcal{D}_N$ hitting $A$, so that in particular $A$ is contained in the $2^{-N}$-neighborhood of $A_N$. By Lemma \ref{lem:Ahlfors-set-to-measure}, the sets $A_N$ are contained in a $(s,C')$-discrete Ahlfors regular set at scale $2^{-N}$, for some $C'=O(C)$.

Let $\e=\e(s,C',t)>0$ be the value given by Theorem \ref{thm:many-large-pinned-dist-sets}. By the theorem, if $N$ is large enough, then there is a set $B_N\subset A_N$ with $|B_N|< 2^{(s-\e)N}$ such that if $x\in A_N\setminus B_N$, then
\[
\cN(\dist(x,A_N),2^{-N}) \ge 2^{tN}.
\]
Let
\[
B=\limsup_N B_N(2^{-N}) = \bigcap_{N=1}^\infty \bigcup_{M=N}^\infty B_M(2^{-M}).
\]
Fix $s'>s-\e$. Since $|B_M|< 2^{(s-\e)M}$, we see that for each $N$ the set $B$ can be covered by a sequence of balls containing $2^{(s-\e)M}$ balls of radius $2^{-M}$ for each $M\ge N$. It follows that $\mathcal{H}^{s'}(B)<0$ so that, letting $s'\downarrow s-\e$, we get $\hdim(B)\le s-\e$.

On the other hand, it follows from the previous observations that if $x\in A\setminus B$, then
\[
\cN(\dist(x,A),2^{-N}) \ge 2^{tN} \quad\text{for large enough } N,
\]
so $\lbdim(\dist(x,A))\ge t$. This gives the first claim.

Now suppose $\mathcal{H}^s(A)>0$. It is enough to check that if $t\in (0,1)$, then
\[
\lbdim(\dist(x,A))\ge t
\]
for $\mathcal{H}^s|_A$-almost all $x$. Suppose otherwise. Then there is a set $B\subset A$ such that $\mathcal{H}^s(B)>0$ (in particular, $\hdim(B)\ge s$), and $\lbdim(\dist(x,B))<t$ for all $x\in B$. This contradicts the first claim.
\end{proof}

\begin{remark}
In fact, $\hdim$ can be replaced by $\mlbdim$ in the first part of the corollary above - the proof is identical. Moreover, a small variant of the proof shows that, under the same assumptions,
\[
\pdim\{x\in A: \ubdim(\dist(x,A))<t\} < s-\e.
\]
\end{remark}

\begin{proof}[Proof of Corollary \ref{cor:dist-set-box-dim}]
We give the proof for $\lbdim$, the proof for $\ubdim$ is almost identical. As in the proof of Corollary \ref{cor:pinned-dist-set-large-dev}, we let $A_N$ be the $(2^{-N})$-discretization of $A$, so that $A_N$ is contained in the $(2^{-N})$-neighborhood of a $(s,C')$-discrete Ahlfors-regular set with $C'=O(C)$. Fix $t\in  (0,1)$; it is enough to show that $\lbdim(\dist(A,A))\ge t$. Let $\e=\e(s,C',t)>0$ be the number given by Theorem \ref{thm:many-large-pinned-dist-sets}. If $N$ is large enough, $|A_N|\ge 2^{(s-\e)N}$, so Theorem \ref{thm:many-large-pinned-dist-sets} says that there is $x=x_N\in A_N$ such that $\mathcal{N}(\dist(x,A_N),2^{-N}) \ge 2^{tN}$. But
\[
\mathcal{N}(\dist(A,A), 2^{-N}) \ge \frac13 \mathcal{N}(\dist(x,A_N),2^{-N})
\]
by the triangle inequality, so the claim follows.
\end{proof}

\section{Distances between two sets}
\label{sec:distances-between-sets}

Now we investigate the set of distances between two sets. The following result immediately implies Theorem \ref{thm:mlbdim-distance-sets-AR}.
\begin{theorem} \label{thm:mlbdim-distance-sets}
Let $A,B\subset \R^d$ be two Borel sets such that $\hdim(A)>1$ and $\mu(B)>0$ for some weakly $1$-regular $\mu$ which also satisfies $\hdim(\mu)>1$. Then
\[
\mlbdim(\dist(A,B)) = 1.
\]
\end{theorem}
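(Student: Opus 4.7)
The plan is to argue by contradiction from $\mlbdim(\dist(A,B))<1$. By the definition of modified lower box dimension, there is then a countable cover $\dist(A,B)\subset\bigcup_i F_i$ and $t<1$ with $\lbdim(F_i)\le t$ for every $i$. Write $B_{x,i}:=\{y\in B:|x-y|\in F_i\}$ and $A_{i,m}:=\{x\in A:\mu(B_{x,i})\ge 1/m\}$. Since $A=\bigcup_{i,m}A_{i,m}$ and Hausdorff dimension is countably stable, there exist $i_0,m_0$ with $\hdim(A_{i_0,m_0})>1$; by Frostman's lemma pick a probability measure $\nu$ on $A_{i_0,m_0}$ with $\hdim(\nu)>1$. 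For $\nu$-typical $x\in A_{i_0,m_0}$, the measure $\mu_x:=\mu_{B_{x,i_0}}$ is weakly $1$-regular (Lemma \ref{weak-reg-subsets-of-positive-meas}) with $\hdim(\mu_x)\ge\hdim(\mu)>1$. Moreover, by the Mattila--Orponen theorem on radial projections \cite{MattilaOrponen15} (see also \cite{Orponen16}), for $\nu$-a.e.\ $x$ the pushforward $\sigma_x\mu$, and hence $\sigma_x\mu_x$, is absolutely continuous on $S^1$. Fix such a good $x$; the goal is to show $H_{N_j}(\phi_x\mu_x)\ge 1-O(\delta)$ along a subsequence $N_j$ realizing $\lbdim(F_{i_0})$ (i.e.\ with $\mathcal{N}(F_{i_0},2^{-N_j})\le 2^{(t+o(1))N_j}$), where $\phi_x(y)=\tfrac12|x-y|$; since $\supp\phi_x\mu_x\subset\tfrac12\overline{F_{i_0}}$, this yields $\mathcal{N}(F_{i_0},2^{-N_j})\ge 2^{(1-O(\delta))N_j}$, contradicting the choice of $N_j$ for $\delta$ small.

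To bound $H_{N_j}(\phi_x\mu_x)$ from below, partition $S^1$ into arcs $I_k$ of length $2^{-q-1}$ with centers $v_k$ ($k=1,\ldots,K$), for $q$ large in terms of $\delta$, and set $A_k:=\sigma_x^{-1}(I_k)\cap\supp\mu_x$ after discarding a small ball around $x$ (whose $\mu_x$-mass is as small as desired since $x\notin\supp\mu_x$). Writing $\bar\mu_k:=\mu_x|_{A_k}/\mu_x(A_k)$, on each $A_k$ the function $\phi_x$ is $C^1$ with $\|D\phi_x(\cdot)-\tfrac12 v_k\|\le 2^{-q-1}$, so Proposition \ref{prop:projected-entropies-from-local-entropies} applies to $(\phi_x,v_k,\bar\mu_k)$; combining with concavity of entropy yields
\[
H_{N_j}(\phi_x\mu_x)\ \ge\ \sum_k \mu_x(A_k) \int H_q(\Pi_{v_k}\eta)\,d\la\bar\mu_k\ra_{[0,N_j]}(\eta)\ -\ O(q/N_j)\ -\ O(1/q).
\]

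It remains to show the main term on the right is $\ge 1-O(\delta)$. Pass to a further subsequence along which $\la\mu_x\ra_{[0,N_j]}\to P$ in the $\widehat\rho$-topology; then $P$ is a CP-distribution (Lemma \ref{lem:limits-are-CPDs}), and the weak $1$-regularity of $\mu_x$ forces $\dim_*P\ge 1$ by the argument of Lemma \ref{lem:lower-dim-regular-CPD}. By Theorem \ref{thm:projections-CPD}, $\cE_P(v)\ge 1$ for Lebesgue-a.e.\ $v\in S^1$. For $n$ past a fixed scale $k_q$ (depending only on $q$ and the minimum distance from $x$ to $\supp\mu_x$), the scenery $\bar\mu_k^{z,n}$ coincides with $\mu_x^{z,n}$ for $z$ away from the $I_k$-boundary; using the absolute continuity of $\sigma_x\mu_x$ to bound the boundary mass, one rewrites the sum above, up to a loss of $O(q/N_j)$, as the average of $H_q(\Pi_{v(z)}\mu_x^{z,n})$ over $(z,n)\in\supp\mu_x\times[k_q,N_j]$, where $v(z)$ denotes the bin center of $\sigma(x,z)$. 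In the limit $j\to\infty$, invoking the ergodic decomposition of $P$ to handle local tangents, this average becomes $\int\cE_P(\sigma(x,z))\,d\mu_x(z)+O(1/q)=\int\cE_P(v)\,d\sigma_x\mu_x(v)+O(1/q)$; the absolute continuity of $\sigma_x\mu_x$ then guarantees that the Lebesgue-null exceptional set for ``$\cE_P\ge 1$'' is also $\sigma_x\mu_x$-null, making the integral at least $1-O(1/q)$ and closing the argument. The delicate step---and the main obstacle---is precisely this transfer: without the Mattila--Orponen absolute continuity, the Lebesgue-a.e.\ projection bound could collapse on the specific directions $\sigma(x,z)$, and this is why qualitative non-degeneracy of the radial projection is indispensable, in contrast to the proof of Theorem \ref{thm:many-large-pinned-dist-sets} where discrete conical density (Lemma \ref{lem:dense-radial-proj-discr-AR}) forced nearly all mass into a single $2^{-q}$-cone.
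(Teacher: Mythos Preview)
Your opening reduction---pigeonholing on the cover $\{F_i\}$ and on the mass $\mu(B_{x,i})$, then using countable stability of $\hdim$ to extract $A_{i_0,m_0}$ with $\hdim>1$---is a clean alternative to the Baire-category reduction of Lemma~\ref{lem:simplif-box-dim}, and your appeal to a radial-projection theorem to obtain $\sigma_x\mu_x\ll\mathcal{H}^1|_{S^1}$ is in the right spirit.

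The genuine gap is in the entropy estimate, at the sentence ``invoking the ergodic decomposition of $P$ to handle local tangents, this average becomes $\int\cE_P(\sigma(x,z))\,d\mu_x(z)+O(1/q)$''. This does not follow. The global limit $P$ of $\la\mu_x\ra_{[0,N_j]}$ only governs the \emph{averaged} scenery; to lower-bound the displayed sum $\sum_k\mu_x(A_k)\int H_q(\Pi_{v_k}\eta)\,d\la\bar\mu_k\ra_{[0,N_j]}(\eta)$ you would need, for each sector $k$, control on a subsequential limit $P_k$ of $\la\bar\mu_k\ra_{[0,N_j]}$ (equivalently, on local tangent distributions $Q_z$ for $z\in A_k$). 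Even granting $\dim_*P_k\ge 1$ via Lemma~\ref{weak-reg-subsets-of-positive-meas}, Theorem~\ref{thm:projections-CPD} gives $\int\min(H_q(\Pi_v\eta),1)\,dP_k(\eta)\to 1$ only for Lebesgue-a.e.\ $v$; the specific bin centre $v_k$ may lie in the exceptional null set for $P_k$. Absolute continuity of $\sigma_x\mu_x$ would neutralise a \emph{single} null set of directions, but here the exceptional set varies with $k$ (equivalently with $z$, through $Q_z$), so nothing prevents $\sigma(x,z)$ from being bad for $Q_z$ for every $z$. There is also a circularity in $q$: to make $\int H_q(\Pi_{v_k}\eta)\,dP_k(\eta)$ close to its limit you need $q$ large in a way that depends on $P_k$, yet $P_k$ already depends on $q$ through the sector width $2^{-q}$.

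The paper avoids precisely this obstacle by reversing the order of choices. It first passes to the global CPD $P$, then fixes a \emph{single} direction $v$ with $\cE(v)=1$---the role of Lemma~\ref{lem:simplif-uniform-angle} (via Mattila--Orponen) is to arrange that such a $v$ can be chosen from a positive-measure set $\Theta$ of directions actually realised as $\sigma(x,y)$ with $x\in A$ and $y$ ranging over most of $B$---next fixes $q$ depending only on $P$ and $\delta$, and only then locates a single dyadic cube $D_i\subset B$ and a single pin $x_i\in A$ with $|\sigma(x_i,y)-v|\le 2^{-q}$ for all $y\in D_i$. With one fixed direction the set $\{\eta:H_q(\Pi_v\eta)>1-\delta\}$ is fixed, so averaging over cubes is legitimate and Corollary~\ref{cor:lower-bound-entropy-pinned-dist} applies directly; no sector decomposition is needed.
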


We begin the proof of Theorem \ref{thm:mlbdim-distance-sets}, by showing that it is enough to prove the corresponding claim for lower box counting dimension.

\begin{lemma} \label{lem:simplif-box-dim}
Suppose that, under the assumptions of Theorem \ref{thm:mlbdim-distance-sets},
\begin{equation} \label{eq:lbd-distance-set}
\lbdim(\dist(A,B)) = 1.
\end{equation}
Then Theorem \ref{thm:mlbdim-distance-sets} holds.
\end{lemma}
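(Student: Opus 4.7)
The plan is: given any countable Borel cover $\dist(A,B)\subset\bigcup_n F_n$, exhibit some $n_0$ with $\lbdim(F_{n_0})\ge 1$; together with the trivial upper bound $\mlbdim(\dist(A,B))\le 1$, this yields $\mlbdim(\dist(A,B))=1$. Since $\lbdim(\bar F)=\lbdim(F)$ for bounded sets, I first replace each $F_n$ by its closure and assume, without loss of generality, that all $F_n$ are closed in $\R$.

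The second reduction is to pass to compact sets on which the relevant measures have full support. By Frostman's lemma, pick a compact $K_A\subset A$ and a Borel probability measure $\nu$ on $K_A$ with $\hdim(\nu)>1$ and $\supp(\nu)=K_A$. By inner regularity of $\mu$, choose a compact subset of $B$ of positive $\mu$-measure, and then shrink it to the support of the normalised restriction of $\mu$ to it; call the resulting compact set $K_B$. The normalisation $\mu'$ of $\mu|_{K_B}$ is weakly $1$-regular by Lemma \ref{weak-reg-subsets-of-positive-meas}, has $\hdim(\mu')>1$, and satisfies $\supp(\mu')=K_B$. Since $\mlbdim$ is monotone under inclusion, it suffices to prove $\mlbdim(\dist(K_A,K_B))\ge 1$.

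The key step is then a Baire-category argument. Since $(a,b)\mapsto|a-b|$ is continuous and the $F_n$ are closed, the sets $E_n=\{(a,b)\in K_A\times K_B:|a-b|\in F_n\}$ are closed in the compact (hence Baire) space $K_A\times K_B$, and their union is all of $K_A\times K_B$; so some $E_{n_0}$ has non-empty relative interior, which must contain a basic open set of the form $A^*\times B^*:=(U\cap K_A)\times(V\cap K_B)$ for some open $U,V\subset\R^2$, with both factors non-empty. Full support of $\nu$ on $K_A$ and of $\mu'$ on $K_B$ then gives $\nu(A^*)>0$ and $\mu'(B^*)>0$, so $\hdim(A^*)\ge\hdim(\nu)>1$, and by Lemma \ref{weak-reg-subsets-of-positive-meas} the renormalisation of $\mu'|_{B^*}$ is a weakly $1$-regular measure assigning positive mass to $B^*$ and with Hausdorff dimension $>1$. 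Thus $(A^*,B^*)$ satisfies the hypotheses of Theorem \ref{thm:mlbdim-distance-sets}, so by the assumed lower-box version $\lbdim(\dist(A^*,B^*))=1$; since $A^*\times B^*\subset E_{n_0}$ implies $\dist(A^*,B^*)\subset F_{n_0}$, we conclude $\lbdim(F_{n_0})\ge 1$. The only genuinely non-trivial ingredient is the Baire step, and the two preparatory reductions -- closing the $F_n$ and restricting to compact full-support sets -- are what make it applicable while preserving the hypotheses of the theorem.
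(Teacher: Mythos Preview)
Your proof is correct and follows essentially the same route as the paper: reduce to compact sets on which the relevant measures have full support (via Frostman for $A$ and restriction of $\mu$ for $B$), then apply Baire's theorem to the preimages of a countable cover of $\dist(A,B)$ to locate a product set $A^*\times B^*$ on which the hypotheses of Theorem~\ref{thm:mlbdim-distance-sets} persist, contradicting the assumed lower-box version. If anything, your write-up is more explicit than the paper's at two points---you close the covering sets before invoking Baire (the paper applies Baire directly to Borel preimages without comment), and you cite Lemma~\ref{weak-reg-subsets-of-positive-meas} to justify that weak $1$-regularity survives restriction.
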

\begin{proof}
Let $A,B$ be as in the statement of Theorem \ref{thm:mlbdim-distance-sets}. Without loss of generality, $A$ and $B$ can be taken to be compact. Moreover, by Frostman's Lemma, we may further assume that $\hdim(A\cap B(x,r))>1$ for any open ball $B(x,r)$ for which $A\cap B(x,r)\neq \emptyset$ (more precisely, let $\nu$ be a measure supported on $A$ such that $\nu(B(x,r)) \le C\, r^s$ for some $s>1$, and replace $A$ by the support of $\mu$). Finally, we may assume that $\supp(\mu)=B$ simply by replacing $B$ by $\supp(\mu_B)$.

After these reductions, suppose $\mlbdim(\dist(A,B))=t<1$, and partition $\dist(A,B)$ into countably many Borel sets $D_j$, so that $\lbdim(D_j)\le t$ for all $j$. By Baire's Theorem (and since we are assuming that $A$ and $B$ are compact), $\dist^{-1}(D_j)$ has nonempty interior in $A\times B$ for some $j$. Hence $\dist^{-1}(D_j)$ contains a set of the form $A_0\times B_0$ where, by our assumptions, $\hdim(A_0)>1$ and $\mu(B_0)>0$. This contradicts \eqref{eq:lbd-distance-set}.
\end{proof}

Recall that the direction determined by two different vectors $x,y\in\R^2$ is denoted by $\sigma(x,y)$. In the next Lemma we perform a further regularization of the set $B$; this step uses a recent result of Mattila and Orponen \cite{MattilaOrponen15}.
\begin{lemma} \label{lem:simplif-uniform-angle}
In order to prove Theorem \ref{thm:mlbdim-distance-sets}, it is enough to prove the following.

Let $A, B,\mu$ be as in the statement of the theorem, and further assume that $A, B$ are compact and disjoint and that there exists a set $\Theta\subset S^1$ of positive length such that for each $v\in \Theta$,
\[
\mu_B\{y: \sigma(x,y)=v \text{ for some }x\in A\} > 1-\delta,
\]
for some $\delta\in (0,1)$. Then
\[
\lbdim(\dist(A,B)) > 1-\e(\delta),
\]
where $\e(\delta)\downarrow 0$ as $\delta\downarrow 0$.
\end{lemma}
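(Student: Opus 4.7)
The plan is to combine Lemma \ref{lem:simplif-box-dim} with the statement to be assumed: by Lemma \ref{lem:simplif-box-dim}, it suffices to show $\lbdim(\dist(A,B)) = 1$ under the hypotheses of Theorem \ref{thm:mlbdim-distance-sets}. I will do this by producing, for each $\delta \in (0,1)$, a triple $(A', B', \mu')$ with $A'\subset A$, $B'\subset B$ Borel and $\mu' = \mu_{B'}$, such that all the hypotheses of the statement to be assumed are met with this value of $\delta$. Then the statement yields $\lbdim(\dist(A',B')) > 1-\e(\delta)$; since $\dist(A',B')\subseteq \dist(A,B)$, letting $\delta\downarrow 0$ gives $\lbdim(\dist(A,B))=1$, as required.

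First I would carry out standard reductions. By inner regularity of Borel measures and Frostman's Lemma, replace $A$ by a compact subset carrying a Frostman measure $\nu$ of some exponent $s>1$, and replace $B$ by a compact subset of positive $\mu$-mass. To achieve disjointness, pick a density point $x_0$ of $\nu$ and a small $r>0$ so that $\hdim(A\cap B(x_0,r))\ge s$ and $\mu(B\setminus B(x_0,2r))>0$; set $A_0:=A\cap B(x_0,r)$ and $B_0:=B\setminus B(x_0,2r)$. These are compact and at positive distance apart, and the restriction $\mu_{B_0}$ is still weakly $1$-regular (by Lemma \ref{weak-reg-subsets-of-positive-meas}), with $\hdim(\mu_{B_0})\ge \hdim(\mu)>1$.

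The essential new input is the production of the direction set $\Theta$. Here I would invoke the Mattila--Orponen theorem \cite{MattilaOrponen15}: since $A_0$ and $B_0$ are compact and disjoint with $\hdim(A_0)>1$ and $\hdim(\mu_{B_0})>1$, for $\mu_{B_0}$-almost every $y\in B_0$ the radial projection $\sigma_y\nu$ is absolutely continuous with respect to arc length on $S^1$, and in particular $\sigma_y(A_0)$ has positive Lebesgue measure. To upgrade this to the required uniform-in-$y$ condition, I would restrict further to $B':=B_0\cap B(y_0,\rho)$, where $y_0$ is a $\mu$-density point at which the above conclusion holds and $\rho$ is taken much smaller than $\dist(y_0,A_0)$. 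On $B'$, the maps $\sigma_y$ are uniform $O(\rho/\dist(y_0,A_0))$-perturbations of $\sigma_{y_0}$, so a combined Egorov and Fubini argument applied to the densities $\{f_y\}_{y\in B'}$ of the measures $\sigma_y\nu$ yields a positive-length set $\Theta\subset S^1$ on which $v\in\sigma_y(A_0)$ for a $\mu_{B'}$-fraction exceeding $1-\delta$ of $y\in B'$. Setting $A':=A_0$ then completes the construction.

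The hard part will be the upgrade to the uniform direction condition with arbitrarily small $\delta$. ``Positive Lebesgue measure of $\sigma_y(A_0)$ for most $y$'' is immediate from Mattila--Orponen, but packaging this so that a single direction $v$ lies in $\sigma_y(A_0)$ for a $(1-\delta)$-fraction of $y$ requires simultaneously controlling the densities $f_y$ of $\sigma_y\nu$ as $y$ varies. The restriction-to-small-ball step makes each $\sigma_y$ a controlled perturbation of $\sigma_{y_0}$, so that (after Egorov gives uniform lower bounds on $f_y$ over a compact set of $\mu_{B'}$-mass close to $1$) the ``robust interior'' of $\sigma_{y_0}(A_0)$ survives for every $y\in B'$. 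Making this continuity-in-$y$ argument quantitative while preserving the conditions $\hdim(A')>1$, $\hdim(\mu')>1$ and weak $1$-regularity of $\mu'$ is the principal technical point.
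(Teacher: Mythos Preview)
Your overall strategy matches the paper's: reduce to $\lbdim$ via Lemma~\ref{lem:simplif-box-dim}, invoke Mattila--Orponen to see that $\sigma_y(A)$ has positive length for $\mu$-a.e.\ $y$, and then extract the uniform direction set $\Theta$ by passing to subsets. The reductions to compact disjoint sets and the preservation of weak $1$-regularity and $\hdim(\mu')>1$ under restriction are all fine.

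Where you diverge from the paper is in the ``hard part'', the production of $\Theta$. The paper does this in three lines with no analysis of the maps $\sigma_y$ at all: set $\Upsilon=\{(v,y):v\in\sigma_y(A)\}\subset S^1\times B$; by Mattila--Orponen and Fubini, $(\gamma\times\mu)(\Upsilon)>0$; pick a $(\gamma\times\mu)$-density point $(v_0,y_0)$ of $\Upsilon$ and a product neighbourhood $\Theta_0\times B_0$ on which $\Upsilon$ has relative measure $>1-\delta/2$; a second application of Fubini then gives a positive-measure $\Theta\subset\Theta_0$ with $\mu_{B_0}\{y:v\in\sigma_y(A)\}>1-\delta$ for each $v\in\Theta$. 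No perturbation of $\sigma_y$, no Egorov, no densities, and only the positive-length (not the absolute-continuity) conclusion of \cite{MattilaOrponen15} is needed.

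Your proposed route---localize $y$ to a small ball, view $\sigma_y$ as a $C^0$-perturbation of $\sigma_{y_0}$, and run Egorov on the densities $f_y$---is substantially more delicate and, as sketched, has a gap: a uniform $O(\rho)$ perturbation of the map $\sigma_{y_0}$ does \emph{not} force the image sets $\sigma_y(A_0)$ and $\sigma_{y_0}(A_0)$ to overlap in large measure (shift a fat Cantor set by an arbitrarily small amount and it can become disjoint from itself). So the ``robust interior'' heuristic does not go through without further input. One can likely repair this by working directly with the product set $\Upsilon$ as above, at which point the perturbation and Egorov steps become superfluous.
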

\begin{proof}
Suppose there exist $A,B,\mu$ as in Theorem \ref{thm:mlbdim-distance-sets} with
\[
\lbdim(\dist(A,B)) < 1.
\]
In light of Lemma \ref{lem:simplif-box-dim}, to derive a contradiction it is enough to show that, given $\delta>0$, we can find subsets $A_0, B_0$ of $A, B$ (depending on $\delta$), so that the pair $(A_0,B_0)$ satisfies the assumptions in the present lemma.

We start by noticing that we can easily make $A, B$ disjoint by taking appropriate subsets so we assume that they are already disjoint as given. By \cite[Corollary 1.5]{MattilaOrponen15}, for $\mu$-almost every $y\in B$, the set $\Theta_y=\{ \sigma(x,y):x\in A\}$ has positive length. Notice that the set \[
\Upsilon=\{ (v,y):v\in\Theta_y\}
\]
is Borel (we leave the routine verification to the reader). Thus, by Fubini, $(\gamma\times \mu)(\Upsilon)>0$ (where $\gamma$ is Lebesgue measure on $S^1$). Let $(v_0,y_0)$ be a $(\gamma\times \mu)$-density point of $\Upsilon$ (for its existence, see e.g. \cite[Corollary 2.14]{Mattila95}). We can then find compact neighborhoods $\Theta_0$ of $v_0$ and $B_0$ of $y_0$,  such that
\[
(\gamma\times \mu)\{ (v,y)\in \Theta_0\times B_0\cap \Upsilon\} \ge (1-\delta/2)\gamma(\Theta_0)\mu(B_0).
\]
Applying Fubini once again, we conclude that for $v$ in a set $\Theta$ of positive measure (contained in $\Theta_0$),
\[
\mu\{ y\in B_0: v\in \Theta_y\} > (1-\delta)\mu(B_0) .
\]
Replacing $B$ and $\mu$ by $B_0$ and $\mu_{B_0}$ concludes the proof.
\end{proof}

\begin{proof}[Proof of Theorem \ref{thm:mlbdim-distance-sets}]
We will prove the claim of Lemma \ref{lem:simplif-uniform-angle} with $\e(\delta)=O(\delta)$. Hence, let $A, B, \mu, \Theta$ and $\delta$ be as in that lemma. We also assume, as we may, that $B\subset [0,1)^2$.

Let $t=\lbdim(\dist(A,B))$. Our goal is then to show that $t>1-O(\delta)$. Recall that $\mathcal{N}(X,2^{-N})$ stands for the number of cubes in $\cD_N$ hit by the set $X$. Let $N_j\to\infty$ be a sequence such that
\begin{equation} \label{eq:convergence-of-box-counting}
\frac{\log\mathcal{N}(\dist(A,B),2^{-N_j})}{N_j} \to t.
\end{equation}
By passing to a subsequence if needed, we may assume that $\la \mu_B \ra_{[0,N_j]}$ converges, in the $\wh{\rho}$ topology, to a distribution $P$ which, as we have seen in Lemma \ref{lem:limits-are-CPDs}, is a CPD. Moreover, using weak $1$-regularity of $\mu$, the same argument from Lemma \ref{lem:lower-dim-regular-CPD} shows that $\dim_*P \ge 1$.

Let $\cE_q$ and $\cE$ be as in Theorem \ref{thm:projections-CPD}. By the last part of that theorem, we know that $\cE(v)=1$ for almost all $v$. Thus, since $\Theta$ has positive measure, we can fix $v$ such that $\cE(v)=1$ and $v\in\Theta$.

From this point on, the proof is similar to that of Proposition \ref{prop:discrete-Falconer} but simpler as we do not need quantitative estimates. Since $\cE_q\to \cE$ pointwise, we can fix $q=q(P,\delta)$ such that $\cE_q(v)>1-\delta^2$ and $1/q<\delta$. Recalling the definition of $\cE_q$, we see from Markov's inequality that
\[
P(\{\eta: H_q(\Pi_v\eta)>1-\delta\})>1-\delta.
\]

Now since $A$ and $B$ are compact and disjoint, there exists $k$ (depending on $A,B,q$) such that if $x\in A, y\in B$ and $\sigma(x,y)=v$, then
\begin{equation} \label{eq:direction-almost-constant-on-square}
|\sigma(x,y')-v| \le 2^{-q} \quad\text{if } y'\in D_k(y).
\end{equation}
Next, let $B_0$ be the union of $D_k(y)$ over all $y$ such that $\sigma(x,y)=v$ for some $x\in A$. Note that $\mu(B_0)>1-\delta$ by hypothesis. Let $D_1,\ldots, D_\ell$ be the cubes in $\cD_k$ that make up $B_0$, and pick $y_i\in D_i, x_i\in A$ such that $\sigma(x_i,y_i)=v$ (i.e. if there are many such pairs we select one; this can be done in a Borel manner although we do not require this). Arguing exactly as in the proof of Proposition \ref{prop:discrete-Falconer}, for each sufficiently large $j$ we find a cube $D_{i}$ (with $i$ depending on $j$) such that
\begin{equation} \label{eq:cube-with-good-projections}
\la \mu_{D_{i}} \ra_{[0,N_j]}(\{\eta: H_q(\Pi_v\eta)>1-2\delta\})\ge 1-O(\delta).
\end{equation}
Hence, there is a value of $i$ such that the above happens infinitely often. From now on we fix that value of $i$, and write $M_j\to\infty$ for the corresponding subsequence of $N_j$.

Write $\phi_x(y)=\frac12|x-y|$. It follows from \eqref{eq:direction-almost-constant-on-square} and Corollary \ref{cor:lower-bound-entropy-pinned-dist} that if $j$ is large enough, then
\[
H_{M_j}(\phi_{x_i}\mu_{D_i}) \ge 1-O(\delta).
\]

Since $\phi_{x_i}\mu_{D_i}$ is supported on $\frac12\dist(A,B)$ and $M_j$ is a subsequence of $N_j$, we conclude from \eqref{eq:convergence-of-box-counting} that
\[
t=\lbdim(\dist(A,B))> 1-O(\delta),
\]
which is what we wanted to show.
\end{proof}

\begin{remark}
The proof of \cite[Corollary 1.5]{MattilaOrponen15} goes through under the assumption of positive $1$-capacity rather than Hausdorff dimension $>1$ (or finite $I_1$ energy for the corresponding statement for measures that occurs in the proof). Hence, the assumptions in Theorem \ref{thm:mlbdim-distance-sets} can be weakened to positive $1$-capacity of $A$ and $I_1(\mu)<+\infty$ instead of $\hdim\mu>1$ (we still need to assume that $\mu$ is weakly $1$-regular).  This gives many examples of (pairs of) sets of dimension $1$ to which the results apply.
\end{remark}

\begin{proof}[Proof of Corollary \ref{cor:pinned-dist-set-upper-box-dim}]
Let $A_0= \{ x\in A:\ubdim(\dist(x,B))=1\}$. The proof of Theorem \ref{thm:mlbdim-distance-sets} shows that $A_0$ is nonempty (we begin with a sequence $N_j\to\infty$ such that $\la \mu_B \ra_{[0,N_j]}$ converges; the rest of the proof is identical). This implies that $\hdim(A\setminus A_0)\le 1$, for otherwise there would be $x\in A\setminus A_0$ such that $\ubdim(\dist(x,B))=1$.
\end{proof}

\section*{Acknowledgments}

I thank Tuomas Orponen for useful discussions, and two anonymous referees for a careful reading and many helpful suggestions that have made this a better paper.

\bibliographystyle{amsplain}


\begin{dajauthors}
\begin{authorinfo}[ps]
  Pablo Shmerkin\\
  Torcuato Di Tella University and CONICET\\
  Buenos Aires, Argentina\\
  pshmerkin\imageat{}utdt\imagedot{}edu \\
  \url{http://www.utdt.edu/profesores/pshmerkin}
\end{authorinfo}
\end{dajauthors}

\end{document}